\title{Using Linearized Optimal Transport to Predict \\ the Evolution of Stochastic Particle Systems}
\author[1]{Nicholas Karris}
\author[2]{Evangelos A.\ Nikitopoulos}
\author[3]{Ioannis Kevrekidis}
\author[4]{Seungjoon Lee}
\author[5]{Alexander Cloninger}
\affil[1,5]{University of California, San Diego}
\affil[2]{University of Michigan}
\affil[3]{Johns Hopkins University}
\affil[4]{California State University, Long Beach}
\date{November 16, 2025}
\begin{document}
\maketitle

\begin{abstract}
We develop an Euler-type method to predict the evolution of a time-dependent probability measure without explicitly learning an operator that governs its evolution.
We use linearized optimal transport theory to prove that the measure-valued analog of Euler's method is first-order accurate when the measure evolves ``smoothly.''
In applications of interest, however, the measure is an empirical distribution of a system of stochastic particles whose behavior is only accessible through an agent-based micro-scale simulation.
In such cases, this empirical measure does not evolve smoothly because the individual particles move chaotically on short time scales.
However, we can still perform our Euler-type method, and when the particles' collective distribution approximates a measure that \emph{does} evolve smoothly, we observe that the algorithm still accurately predicts this collective behavior over relatively large Euler steps, thus reducing the number of micro-scale steps required to step forward in time.
In this way, our algorithm provides a ``macro-scale timestepper'' that requires less micro-scale data to still maintain accuracy, which we demonstrate with three illustrative examples: a biological agent-based model, a model of a PDE, and a model of Langevin dynamics.
\end{abstract}

\tableofcontents

\section{Introduction}\label{sec:intro}

In this paper, we consider the problem of predicting the evolution of a time-dependent probability measure \(t \mapsto \mu_t\) on \(\R^d\), representing the distribution of particles in some physical system, given very limited information about the system's underlying dynamics.
Often, the physics governing the system can be approximated by an ``oracle operator'' \(E_h\) that takes as input the present state \(\mu_t\) of the system and returns an approximation \(E_h(\mu_t) \approx \mu_{t+h}\) after ``evolving'' according to the underlying dynamics for a short time \(h\).
As an example, \(E_h\) could be represented by an agent-based micro-scale model, in which individual particles act autonomously but exhibit some collective behavior when viewed as a group.

A natural way to understand such physical systems is to attempt to learn the oracle operator \(E_h\).
Many approaches to learning operators focus on tracking a sample of \(\mu_t\) through the actions of \(E_h\) and then interpolating the results \cite{lu2021learning, goswami2023physics, goswami2022deep, mhaskar2023local, kovachki2024data}.
However, in many applications, it is not possible to track the action of \(E_h\) on individual sample particles (e.g., due to randomness or statistical unreliability).
Moreover, these approaches typically attempt to learn the \emph{global} dynamics (both in time and space) of the operator, which in general requires data about the operator's actions on particles at many locations and many times.
Other approaches attempt to learn the dynamics by attempting to learn an SDE that approximates the observed evolution \cite{TLGD24,BR25}.
We consider a different perspective on tackling this problem.
Instead of learning the entire operator \(E_h\), we focus on the minimal data needed to predict the evolution of a particular initial distribution.
We ask whether it is possible to predict the measure's evolution \emph{without} needing to understand the underlying dynamics.
Explicitly, given some $\mu_t$ and $E_h(\mu_t) \approx \mu_{t+h}$ for only some small time step $h$, can we predict the measure $\mu_{t+H}$ for a larger time step $H$?
We should not need data about how \(E_h\) acts on other distributions at other times to do this.
If we can, then we can build an Euler-type method for approximating the long-term evolution of the distribution by making successive approximations, and this approximation method would only require \emph{local} knowledge of \(E_h\).
There are several potential approaches that attempt to only use local data, each with its own merits and drawbacks.
One drawback common to most approaches, though, is a reliance on computing and predicting summary statistics and then reconstructing a distribution with the predicted statistics.
We avoid this reliance on summary statistics and instead take a linearized optimal transport (LOT)--based approach informed by the geometry of the Wasserstein manifold that directly uses the measure-valued data to predict \(\mu_{t+H}\).

To explain the philosophy of our prediction algorithm, we briefly discuss curves on the Wasserstein manifold.
Precise definitions and results are reviewed in Section \ref{sec:prelim}.
Let $\cP_2(\R^d)$ be the space of Borel probability measures on $\R^d$ with finite second moment, and let $W_2$ be the ($2$-)Wasserstein distance on $\cP_2(\R^d)$.
If $\mu$ is a sufficiently ``smooth'' curve, written $t \mapsto \mu_t$, on the ``Wasserstein manifold'' $\W_2 \coloneqq (\cP_2(\R^d),W_2)$, then there exists a ``tangent field'' governing its evolution.
Explicitly, there is a time-dependent vector field $\bv_t \colon \R^d \to \R^d$ representing the time derivative of $\mu$ through the continuity equation
\[
\partial_t\mu_t + \nabla \cdot (\bv_t\mu_t) = 0.
\]
Intuitively speaking, the vector field $\bv_t$ describes the ``flow of mass'' of the evolving measure.
Furthermore, if $\mu$ always has a density, then its tangent field can be computed using optimal transport maps:
\[
    \bv_t = \lim_{h \to 0} \frac{T_{\mu_t}^{\mu_{t+h}} - \id}{h},
\]
where $T_{\mu_t}^{\mu_{t+h}}$ is the optimal transport map from $\mu_t$ to $\mu_{t+h}$ and $\id = \id_{\R^d}$.
This leads to the key insight that one can use optimal transport maps from \(\mu_t\) to \(\mu_{t+h}\) for small values of \(h\) to understand approximately where each infinitesimal piece of mass of \(\mu_t\) is moving.
Moreover, the interpretation of \(t \mapsto \bv_t\) as the tangent field to the curve \(\mu\) on the Wasserstein manifold suggests that the ``geodesic starting at \(\mu_t\) in the direction of \(\bv_t\),'' given by $s \mapsto (s\bv_t+\id)_{\#}\mu_t$, is a first-order accurate approximation of the true curve.
This result is made precise in Theorem \ref{thm:vectorfields}\ref{item:wp-first-order-Taylor}.
It also suggests a measure-valued analog of Euler's method for ODEs on the Wasserstein manifold, which we develop in Section \ref{subsec:wasserstein-euler}.

Euler's method requires complete knowledge of the dynamics of an ODE, and similarly, standard numerical methods for approximating measure evolution according to the continuity equation often require such knowledge \cite{EGH00,BS08,NFM18}.
As discussed in the first paragraph, our setting is fundamentally different because we suppose instead that we can only access dynamics information by querying an oracle $E_h$ for a short time $h$.
In this case, although we cannot know the true tangent field $t \mapsto \bv_t$, we can hope to approximate it by
\[
\bv_t^h \coloneqq \frac{T_{\mu_t}^{E_h(\mu_t)} - \id}{h} \approx \frac{T_{\mu_t}^{\mu_{t+h}} - \id}{h}.
\]
We then use this approximate tangent field to approximate $\mu_{t+H}$ as $\widetilde \mu_{t+H} \coloneqq (H\bv_t^h+\id)_{\#}\mu_t$, the geodesic starting at $\mu_t$ in the direction $\bv_t^h$ evaluated at time $H$.
Iterating such approximations leads to an Euler's method ``with incomplete information,'' which we discuss in Sections \ref{subsec:Euclideanpooreuler} and \ref{subsec:otvfs-finite-difs} for sufficiently ``smooth'' curves with densities at all times.
Note well that this Euler-type method allows us to predict the evolution of $\mu$ without attempting to learn anything about the operator~\(E_h\).

A key feature of the above-described approach is that Euler's method with incomplete information can be formulated in fairly arbitrary situations, e.g., for chaotically evolving discrete measures.
In this paper, we specifically consider cases arising from particle systems.
Let $x_1(t),\ldots,x_N(t)$ be a system of $N$ (random) particles in $\R^d$ evolving in time, and define $\mu_t^N \coloneqq N^{-1}\sum_{i=1}^N \delta_{x_i(t)}$.
If $\nu \coloneqq \mu_t^N$ and $\rho \coloneqq E_h(\mu_t^N) \approx \mu_{t+h}^N$, then the Euler-type method described above becomes the following algorithm:
\begin{enumerate}
    \item Compute a (discrete) optimal transport plan from \(\nu\) to \(\rho\), and compute the barycentric projection to obtain a map $T$.
    \item Use the map from Step 1 to form the difference quotient $\mathbf{v}_t^h \coloneqq \frac{T-\id}{h}$.
    \item For each point in the support of \(\nu\), take an Euler step using its current location and the velocity given by the ``optimal transport vector field'' $\mathbf{v}_t^h$.
    \item Set \(\widetilde\mu_{t+H}^N\) to be the uniform distribution on the points obtained by performing the Euler step from Step 3 on each point in the support of \(\nu\).
    \item Repeat Steps 1--4 using \(\widetilde\mu_{t+H}^N\) as the ``initial measure'' \(\nu\).
\end{enumerate}
Our actual algorithm, laid out in Section \ref{subsec:discrete-algorithm}, is more complicated.
Steps 1--5 above suffice for the present introductory discussion.

It is important to highlight that \textit{both} regularity properties demanded of $\mu$ in the earlier discussion of tangent fields and Euler-type methods are generally false for $\mu^N$, the curve $t \mapsto \mu_t^N$.
Certainly, $\mu^N$ never has a density because it is always a discrete measure.
Moreover, $\mu^N$ can fail to be ``smooth'' because the particles may evolve chaotically.
For example, if $x_1,\ldots,x_N$ are Brownian motions, then the sample paths are nowhere differentiable.
Therefore, \emph{a priori}, it is not clear the algorithm above actually works in this setting.
In many systems of interest, however, there is a ``smooth'' curve $t \mapsto \mu_t$ of measures with densities such that ``$\mu^N \approx \mu$'' for large $N$, e.g., if we start with the curve $\mu$ and the $x_i$'s are i.i.d.\ samples of $\mu$.
For certain such systems (Sections \ref{sec:chemotaxis}, \ref{sec:burgers}, and \ref{sec:halfmoon}), we experimentally demonstrate that the LOT-based algorithm described above can be used to predict the evolution of the particle distribution \(\mu^N\) without explicitly reconstructing the underlying distribution \(\mu\) or learning the underlying dynamics described by \(E_h\).

\subsection{Practical Utility for Agent-based Models}\label{subsec:motex}

One application for which our approximation algorithm is particularly useful is if \(E_h\) is given by a computationally expensive micro-scale particle simulation, as it is in Section \ref{sec:chemotaxis}.
Then we can think of \(\mu_t^N\) as the locations of the particles at time \(t\) and \(E_h(\mu_t^N)\approx \mu_{t+h}^N\) as their locations at the ``next time step'' in the simulation.
The benefit of accurately predicting \(\mu_{t+H}^N\) for a substantial time step \(H\gg h\) is that one can then skip potentially many micro-scale steps in the simulation and still understand how \(\mu^N\) evolves in time.
Indeed, after predicting \(\mu_{t+H}^N\), one can reinitialize the simulation with particles at those locations, take another small step in the simulation to obtain an estimate for \(\mu_{t+H+h}^N\), and use the prediction algorithm again to approximate \(\mu_{t+2H}^N\). 
Following strategies of some projective integration techniques \cite{GK03a, GK03b, SGOK05}, doing this successively gives an Euler-type method for approximating the system's behavior for a long time without the need to perform as many micro-scale steps.

Approximating distributional trajectories through an Euler-type method is not the only application of our proposed approach, however.
At its core, our approach gives a way to step forward in time without needing to compute the equivalent number of steps of the micro-scale model \(E_h\).
If computing \(E_h\) is expensive, then this means we get a comparatively cheap ``macro-scale timestepper,'' which has many applications beyond merely speeding up simulations.
For example, one could use such a macro-scale timestepper to perform system-level tasks such as fixed-point computations (including finding steady-state distributions which are dynamically unstable), bifurcation computations, stability analysis, and even coarse-grained feedback controller design and optimization \cite{SPK03,FVGSK25,GKT02}.
Thus, while we primarily evaluate our method on its accuracy in estimating distributional trajectories, the introduction of an accurate, cheap macro-scale timestepper has many implications beyond those directly discussed in the present paper.
As such, when evaluating the ``computational improvement'' of our method, we focus entirely on the number of micro-scale steps needed in its computation rather than the actual speed improvement of the simulation.
Even though the micro-scale models we use in our experiments are not \emph{extremely} expensive (and in Sections \ref{sec:burgers} and \ref{sec:halfmoon} are actually fairly cheap), one should have in mind applications where these micro-scale models are indeed the computational bottleneck as they are in, e.g., other agent-based biological models, climate models, etc.

One particular motivating class of such models are those of so-called ``fast-slow'' systems.
Such systems are difficult to analyze because each particle has multiple underlying variables impacting its behavior that update on vastly different time scales.
In general, the ``fast dynamics'' update on short time scales and generally manifest as chaotic short-time particle behavior.
For example, a particle may quickly change directions many times as a result of its interactions with nearby particles.
``Slow dynamics,'' on the other hand, update on much longer time scales and generally manifest as the eventual emergence of a collective behavior of the particles, e.g., the migration of bacteria toward a higher concentration of a food source.
Nominally, this means that in order to simulate such a system, a micro-scale simulation must be run with small time steps to capture the fast dynamics accurately, and \textit{many} such time steps are required for the slow dynamics to emerge.
Of course, this can be impractical when each step is computationally expensive.
One may hope to circumvent this by increasing the step size for the micro-scale simulation, thus requiring fewer steps to reach the desired end time, but some models computationally break down if the time step is too large (for example, the micro-scale model used in \ref{sec:chemotaxis} has this property --- see Remark \ref{rmk:chemotaxis-step-size-limit}) and others quickly lose accuracy \cite{GK03a,RGK03}.
Thus, a different approach is required to accurately simulate such systems.

Many approaches to understanding and more efficiently simulating these types of systems have been explored.
A few examples of so-called ``multi-scale methods'' are presented in \cite{KGHKRT03, KGH04, GKT02, Van03, ELV05, EE03}, and other ``projective integration methods'' are proposed in \cite{GK03a, GGK09} and others.
The techniques of Neural ODEs and Continuous Normalizing Flows can also be used to understand similar problems, e.g., in \cite{HMTFKWK22, THWDK20}.
Each of these techniques avoids the computational cost of simulating the system, but they all do so by restricting attention to specific classes of particle systems.
Another approach that works on general classes of particle systems is to learn an underlying SDE that models the observed behavior of the micro-scale simulator \cite{TLGD24,BR25}.
One could then hope to achieve a similar effect of skipping many expensive micro-scale steps by simulating the learned SDE (which is much cheaper) to the desired end time.
However, learning an SDE is prone to errors when only given local data about the behavior of the particles.
Moreover, these approaches assume that there is an underlying SDE that can accurately approximate the behavior of the micro-scale model at all.
In contrast, the method we propose makes efficient use of the limited local data, and it also assumes essentially nothing about the particular class of the underlying micro-scale simulator, only that the bulk behavior appears ``smooth.''
In particular, it does not require an assumption about the precise form of the dynamics of an agent-based model, nor does it attempt to learn those dynamics explicitly.
We avoid the computational cost not by making an assumption about the problem but by using optimal transport to detect the slow dynamics after only a small number of micro-scale steps.
The key insight is that, for systems that exhibit this multi-scale behavior, we can use an optimal transport map to effectively ignore the chaotic motion of \textit{individual} particles in favor of focusing on the slower collective behavior.

This insight has been used to a similar effect before: \cite{SGOK05} explores how a micro-scale model of bacterial chemotaxis in 1-D could be sped up by computing Euler-type steps after first sorting the locations of the bacteria.
In one dimension, sorting a discrete distribution is exactly the same as computing an optimal transport map, so it turns out that their approach is similar to ours.
This observation was the genesis of the present paper.
To demonstrate the practical utility of our algorithm, we perform a similar experiment with the same chemotaxis model (Section \ref{sec:chemotaxis}) as well as a model of a partial differential equation (Section \ref{sec:burgers}).
We show that our algorithm enables us to skip many micro-scale steps and still approximate the long-term behavior of the bacteria distribution, indicating that our Euler-type ``macro-scale timestepper'' is indeed reasonably accurate despite requiring fewer micro-scale steps.

Framing the algorithm in terms of optimal transport as opposed to particle sorting not only provides it theoretical motivation, as discussed earlier, but also enables its immediate generalization to particle systems in higher dimensions.
To demonstrate its efficacy on 2-D particle systems, we use the algorithm to simulate a model of overdamped Langevin dynamics in 2D (Section \ref{sec:halfmoon}).
While standard micro-scale models of Langevin dynamics (and other SDEs) are computationally cheap, the example is a good proof of concept for higher dimensions because it exhibits the same multi-scale behavior as described above.
Specifically, the particles move chaotically on short time scales, yet the overall distribution evolves slowly and predictably.

\subsection{Major Contributions}
\begin{itemize}
    \item We use linearized optimal transport to describe an analog of Euler's method for differential equations on the Wasserstein manifold (Section \ref{subsec:wasserstein-euler}).
    \item We prove that, under sufficient regularity conditions, our measure-valued Euler's method with step size \(H\) has local truncation error \(\cO(H^2)\) and corresponding global error \(\cO(H)\) (Lemma \ref{lem:lte-second-order} and Theorem \ref{thm:w2-euler-global-error}).
    \item We detail an algorithm that uses linearized optimal transport to approximate the evolution of discrete particle systems (Section \ref{sec:discrete}).
    \item We apply our approximation algorithm to three illustrative examples --- bacterial chemotaxis as described in \cite{SGOK05} (Section \ref{sec:chemotaxis}), a partial differential equation (Section \ref{sec:burgers}), and Langevin dynamics (Section \ref{sec:halfmoon}) --- and discuss its computational benefits.
    \item We demonstrate that our method outperforms equivalent methods (a particle-wise approach and JKOnet* \cite{TLGD24}) for approximating the distribution's evolution when only given access to local data.
\end{itemize}

\section{Preliminaries}\label{sec:prelim}

\subsection{Optimal Transport Background}

Let \(\cal P(\R^d)\) be the set of Borel probability measures on \(\R^d\), and let \(\cal P_2(\R^d)\) be the set of measures \(\mu\in\cal P(\R^d)\) such that \(\int_{\R^d}\|x\|_2^2\,d\mu(x) < \infty\).
For a measurable function \(T:\R^d\to \R^d\) and a measure \(\sigma\in\cal P(\R^d)\), the pushforward of \(\sigma\) by \(T\) is the measure \(T_\#\sigma\) defined by
\[
    (T_{\#}\sigma)(A) \defeq \sigma(T^{-1}(A))
\]
where \(A\subset\R^d\) is measurable and \(T^{-1}(A)\) is the preimage of \(A\) under \(T\).

Given \(\sigma, \mu\in\cP_2(\R^d)\), a classical problem (the ``Monge Problem'') is to seek an ``optimal'' map \(T\) transporting \(\sigma\) to \(\mu\) in the sense that \(T_\#\sigma = \mu\).
However, there does not always exist such a map, which leads to the more general ``Kantorovich formulation'':
\begin{defn}
    For \(\sigma, \mu \in\cP_2(\R^d)\), the (2-) Wasserstein distance between \(\sigma\) and \(\mu\) is
    \begin{equation}\label{eqn:wass-defn}
        W_2(\sigma, \mu) \defeq \min_{\gamma\in\Gamma_{\sigma,\mu}}\left(\int_{\R^d\times\R^d}\|x-y\|^2\,d\gamma(x,y)\right)^{\frac12},
    \end{equation}
    where \(\Gamma_{\sigma,\nu}\) is the set of all couplings between \(\sigma\) and \(\mu\), i.e., the set of all \(\gamma\in\cP_2(\R^d\times\R^d)\) such that
    \[
        \gamma(A\times \R^d) = \sigma(A), \qquad \gamma(\R^d\times A) = \mu(A)
    \]
    for all measurable \(A\subseteq \R^d\).

    The coupling \(\gamma\) that satisfies \eqref{eqn:wass-defn} is called the ``optimal coupling'' or the ``optimal plan.''
\end{defn}

It is well known that \(W_2\) defines a metric on \(\cP_2(\R^d)\) (see, e.g., \cite{PC19}).

We will only refer to ``optimal couplings'' a handful of times in this paper (and only to handle technical details).
Instead, we usually deal with optimal ``maps'' as in the Monge problem, and Brenier's Theorem is the standard result that guarantees the existence of such maps under more hypotheses:

\begin{thm}[Brenier \cite{Bre91}]\label{thm:brenier}
    Let \(\sigma,\mu\in\cal P_2(\R^d)\), and suppose \(\sigma\) has a density with respect to the Lebesgue measure \(\cL^d\).
    The optimal coupling \(\gamma\) that satisfies \eqref{eqn:wass-defn} is unique, and there exists a \(\sigma\)-a.e.\ unique map \(T:\R^d\to\R^d\) such that \(\gamma = (\id,T)_\#\sigma\).
    That is,
    \[
        W_2(\sigma,\mu) = \left(\int_{\R^d}\|x-T(x)\|^2\,d\mu(x)\right)^{\frac12}.
    \]
    Moreover, there exists a convex function \(\phi:\R^d\to \R\), $\sigma$-a.e.\ unique up to an additive constant, such that \(T = \nabla\phi\).
\end{thm}

\begin{defn}
    The map \(T\) given in Theorem \ref{thm:brenier} is the optimal transport map from \(\sigma\) to \(\mu\), and we denote it \(T_\sigma^\mu\).
\end{defn}

Optimal transport is also possible between discrete measures.
In general, there is not an optimal transport map between discrete measures, but the following result guarantees that one exists in the case of uniform measures on the same finite number of points.

\begin{prop}[Proposition 2.1 in \cite{PC19}]\label{prop:permutation}
    If \(\sigma = \frac1N\sum_{i=1}^N \delta_{x_i}\) and \(\mu = \frac1N\sum_{i=1}^N \delta_{y_i}\) are uniform measures on the same number of distinct points, then there exists an optimal transport map \(T_\sigma^\mu\) in the sense that
    \[
        W_2(\sigma,\mu) = \left(\frac1N\sum_{i=1}^N\|x_i - T_\sigma^\mu(x_i)\|^2\right)^{\frac12}
    \]
    Moreover, if \(T_\sigma^\mu\) is such a map, then there exists a permutation \(\tau\in S_N\) (the symmetric group on \(N\) elements) such that \(T_\sigma^\mu(x_i) = y_{\tau(i)}\) for all \(i = 1, \dots, N\). 
\end{prop}

Unlike in Theorem \ref{thm:brenier}, optimal transport maps as in Proposition \ref{prop:permutation} are not necessarily unique, even \(\sigma\)-a.e.
In this discrete case, the notation \(T_\sigma^\mu\) will denote a particular choice of an optimal map.

\subsection{Tangent Fields and Linearized Optimal Transport}\label{subsec:tangent-fields-and-LOT}

As Otto first observed in \cite{Ott01}, \(\cP_2(\R^d)\) has a formal infinite-dimensional Riemannian manifold structure.
Because of this, the language and philosophy of differential geometry are often used in optimal transport theory and its applications.
The key examples of interest to us and, more generally, the subject of linearized optimal transport (LOT) are the descriptions of the ``tangent space(s)'' and ``exponential map'' of the ``Wasserstein manifold.''
Below, we present the precise definition of the tangent space and a result that justifies its definition and underlies the philosophy and results of the present paper.

\begin{defn}[(8.0.2) in \cite{AGS08}]\label{defn:wp-tangent-space}
For $\sigma \in \cP_2(\R^d)$, define the \textit{tangent space} \(T_\sigma\W_2\) of the ``Wasserstein manifold'' \(\W_2 \defeq (\cP_2(\R^d),W_2)\) at \(\sigma\) to be the closure in $L^2(\R^d,\sigma;\R^d)$ of $\{\nabla \phi : \phi \in C_c^{\infty}(\R^d)\}$.
\end{defn}

\begin{thm}[Theorem 8.3.1, Proposition 8.4.5, and Proposition 8.4.6 in \cite{AGS08}] \label{thm:vectorfields}
    Let $I \subseteq \R$ be an open interval, let \(\mu: I \to \cP_2(\R^d)\) be an absolutely continuous curve in \(\W_2\) (written $t \mapsto \mu_t$), and let \(|\mu'|\) be the metric derivative of $\mu$.
    \begin{enumerate}[label=(\roman*),font=\normalfont,leftmargin=2em]
    \item There exists a Borel vector field \(I \times \R^d \ni (t,x)\mapsto \bv(t,x) = \bv_t(x) \in\R^d\) such that
    \begin{equation}\label{eqn:vfmin}
        \|\bv_t\|_{L^2(\mu_t)} \defeq \left(\int_{\R^d} \|\bv_t(x)\|_2^2\,d\mu_t(x)\right)^\frac12 \leq |\mu'|(t) \quad \text{ for } \cL^1 \text{-a.e.\ } t\in I
    \end{equation}
    and the continuity equation
    \begin{equation}\label{eqn:conteqn}
        \del_t\mu_t + \nabla\cdot(\bv_t\mu_t) = 0
    \end{equation}
    holds in the sense of distributions, i.e.,
    \begin{equation}\label{eqn:conteqndists}
        \int_I\int_{\R^d}\left[\del_t\phi(x,t) + \langle \bv_t(x), \nabla_x\phi(x,t)\rangle\right]d\mu_t(x)dt = 0 \quad \text{ for all } \phi\in C_c^{\infty}(\R^d\times I).
    \end{equation}
    The map $t \mapsto \mathbf{v}_t$ is $\cL^1$-a.e.\ uniquely determined by \eqref{eqn:vfmin} and \eqref{eqn:conteqn} and is called the {\rm tangent field} of $\mu$.\label{item:tangent-field-exist}
    \item If $\bv \colon I \times \R^d \to \R^d$ is any Borel vector field satisfying \eqref{eqn:conteqn}, then \eqref{eqn:vfmin} is satisfied if and only if \(\bv_t\in T_{\mu_t}\W_2\) for \(\cL^1\)-a.e.\ \(t\in I\).\label{item:tangent-field-tangent}
    \item If $\bv$ is the tangent field of $\mu$, then
    \begin{equation}\label{eqn:otprojs-firstorder}
        \lim_{h\to 0}\frac{W_2(\mu_{t+h}, (\id + h\bv_t)_{\#}\mu_t)}{|h|} = 0 \quad \text{ for } \cL^1 \text{-a.e.\ } t\in I.
    \end{equation}
    In particular, if \(\mu_t \ll \cL^d\) for all \(t\in I\), then for \(\cL^1\)-a.e.\ \(t\in I\),
    \begin{equation}\label{eqn:otderivs-firstorder}
        \lim_{h\to 0}\frac1h(T_{\mu_t}^{\mu_{t+h}} - \id) = \bv_t \quad \text{ in } L^2(\R^d, \mu_t; \R^d),
    \end{equation}
    where \(T_{\mu_t}^{\mu_{t+h}}\) is the optimal transport map from \(\mu_t\) to \(\mu_{t+h}\).\label{item:wp-first-order-Taylor}
    \end{enumerate}
\end{thm}

\begin{rmk}
    For a general choice of \(\sigma\in\cP_2(\R^d)\), the best one can hope for is that the map \(t\mapsto T_\sigma^{\mu_t}\in L^2(\R^d, \sigma; \R^d)\) is at most \(\tfrac12\)-H\"older continuous, even if the curve \(\mu\) is Lipschitz \cite{Gig11, MDC20}.
    However, \eqref{eqn:otderivs-firstorder} says that by fixing \(t\) and choosing \(\sigma = \mu_t\), the curve \(h\mapsto T_{\mu_t}^{\mu_{t+h}}\) becomes ``differentiable'' at \(h=0\).
\end{rmk}

The exponential map can now be computed by finding the tangent field of a geodesic between two points \(\mu_0, \mu_1\in\cP_2(\R^d)\).
One can show (Section 7.2 of \cite{AGS08}) that if $T_{\mu_0}^{\mu_1}$ is an optimal transport map from $\mu_0$ to $\mu_1$, then the curve
\[
    t\mapsto \mu_t \defeq ((1-t)\id + tT_{\mu_0}^{\mu_1})_\# \mu_0
\]
is a (constant-speed) geodesic in \(W_2\) from \(\mu_0\) to \(\mu_1\).
Furthermore, the vector field
\[
    \bv_t(x)\defeq (T_{\mu_0}^{\mu_1} - \id)\left(((1-t)\id + tT_{\mu_0}^{\mu_1})^{-1}(x)\right)
\]
is the tangent field of the geodesic \(\mu\) (Section 5.4 of \cite{San15}).
Therefore,  $\bv_0 = T_{\mu_0}^{\mu_1} - \id$ is the ``tangent vector'' to $\mu$ at $t=0$.
We conclude that the logarithm map, i.e., the inverse of the exponential map, at \(\mu_0\) is the map
\[
    \log_{\mu_0}^{\W_2} : \W_2 \to T_{\mu_0}\W_2, \qquad \log_{\mu_0}^{\W_2}(\mu_1) = \bv_0 = T_{\mu_0}^{\mu_1} - \id.
\]
Since $(\mathbf{v}_0 +\id)_{\#}\mu_0 = (T_{\mu_0}^{\mu_1})_{\#}\mu_0 = \mu_1$, it follows that the exponential map at $\mu_0$ is the map
\[
    \exp_{\mu_0}^{\W_2} \colon T_{\mu_0}\W_2 \to \W_2, \qquad \exp_{\mu_0}^{\W_2}(\mathbf{v}) = (\mathbf{v}+\id)_{\#}\mu_0.
\]
(Please see \cite{AG13, Lot08} for additional discussion.)
In particular, if $\mu$ is a general absolutely continuous curve with velocity field $\bv$, then Theorem \ref{thm:vectorfields}\ref{item:wp-first-order-Taylor} says
\[
    \mu_{t+h} \approx (\id+h\bv_t)_\#\mu_t = \exp_{\mu_t}^{\W_2}(h\bv_t)
\]
for small $h$, which makes sense from a differential geometric point of view.

An important consequence of the previous paragraph's description of the Wasserstein manifold's logarithm map is that \(\mu_1\mapsto T_{\mu_0}^{\mu_1}-\id\) ``linearizes'' \(\cP_2(\R^d)\);
hence the L in LOT.
Specifically, for a fixed reference measure \(\sigma\in\cP_2(\R^d)\), we can use the map $\mu\mapsto T_\sigma^\mu-\id$ to ``embed'' the space \(\cP_2(\R^d)\) into the linear space \(L^2(\R^d, \sigma; \R^d)\).
There are many advantages to this linear embedding; see, for example, \cite{WSBOR13,MC21} for computational speed-up and supervised learning and \cite{CHKM23} for performing PCA.

\section{Euler-type Methods on the Wasserstein Manifold}\label{sec:otvfs}

\subsection{Motivation from Euclidean Space}\label{subsec:Euclideanpooreuler}

In the sections following this one, we develop Euler-type methods for approximating the solutions to (nice) ``ODEs'' in the Wasserstein manifold.
To understand these methods, it is helpful to discuss their analogs in the context of ODEs in \(\R^d\).
These analogs in \(\R^d\) themselves are not useful methods for numerically solving ODEs, but they are useful to present to build intuition for what our Euler-type method is doing geometrically.
To this end, suppose we have a particle whose location at time \(t\) is described by some curve \(\gamma(t)\) solving the initial value problem
\[
    \gamma(0) = x_0, \qquad \gamma'(t) = f(t,\gamma(t)).
\]
Our goal is to estimate \(\gamma(T)\) for some large \(T\).
If we happen to have access to the underlying ``dynamics'' function \(f\), then we can use ordinary Euler's method:
For a time step \(H>0\) and discrete times \(t_n = nH\), approximate \(\gamma(t_n) \approx \gamma_{(n)}\), where \(\gamma_{(n)}\) is defined recursively by
\[
    \gamma_{(0)} = x_0, \qquad \gamma_{(n+1)} = \gamma_{(n)} + Hf(t_n, \gamma_{(n)}) = \exp_{\gamma_{(n)}}^{\R^d}(Hf(t_n,\gamma_{(n)})),
\]
where $\exp_{\gamma_{(n)}}^{\R^d}$ denotes the exponential map of $\R^d$ at $\gamma_{(n)}$.
Under sufficient regularity conditions on \(f\), it is well known that Euler's method is first-order accurate.
The measure-valued analog of this method is developed in Section \ref{subsec:wasserstein-euler}, and we prove an analogous accuracy result.

Suppose now that we have ``imperfect information'' about our system's dynamics, i.e., that we do not have explicit access to the underlying function \(f\).
Then we cannot perform Euler's method exactly.
Instead, suppose we have access to some oracle \(E_h\) that, given a time \(t\) and a particle location \(\gamma\), returns an approximation of where that particle would be at time \(t+h\) for some small time step \(h>0\).
(For us, this oracle will be a micro-scale numerical simulation that is only accurate for very small time steps \(h\) and is computationally expensive, so using it to simulate long-term behavior is impractical.)
Given this information, we attempt to approximate \(f(t,\gamma)\) by
\[
    f(t,\gamma) \approx \frac{E_h(t,\gamma) - \gamma}{h} \eqdef \hat f(t,\gamma).
\]
Using this approximation, we then define the Euler-type method
\[
    \gamma_{(0)} = x_0, \qquad \gamma_{(n+1)} = \gamma_{(n)} + H\hat f(t_n,\gamma_{(n)}) = \exp_{\gamma_{(n)}}^{\R^d}\big(H\hat f(t_n,\gamma_{(n)})\big).
\]
Of course, this method also makes sense for other approximations \(\hat f\) of \(f\), and the error of the method depends on how well \(\hat f\) approximates \(f\).
While precise statements can be made, we omit the details for brevity.
The measure-valued analog of this Euler-type method with a finite-difference approximation of the ``dynamics'' function is discussed in Section \ref{subsec:otvfs-finite-difs}.
In Section \ref{sec:discrete}, we describe the ``discrete case'' of this approximation algorithm (in which \(E_h\) is a given micro-scale simulator).

\subsection{Euler's Method on the Wasserstein Manifold}\label{subsec:wasserstein-euler}

According to Theorem \ref{thm:vectorfields}'s characterization of tangent fields to curves on $\W_2$, the measure-valued analog of the ODE $\gamma'(t) = f(t,\gamma(t))$ is the continuity equation
\begin{equation}\label{eqn:wp-ode}
    \partial_t\mu_t + \nabla \cdot(F(t,\mu_t)\,\mu_t) = 0,
\end{equation}
where \(F\) is some ``(time-dependent) vector field on the Wasserstein manifold.''
Given such a ``vector field'' \(F\) and an initial measure \(\mu_0\), the measure-valued analog of Euler's method is the following:
For discrete times \(t_n = nH\), set
\begin{equation}\label{eqn:wp-euler-method}
    \mu_{(0)} \defeq \mu_{0}, \qquad \mu_{(n+1)} \defeq \exp_{\mu_{(n)}}^{\W_2}(HF(t_n,\mu_{(n)})) = (\id + HF(t_n,\mu_{(n)}))_\#\mu_{(n)}.
\end{equation}
With this definition, one should expect that \(\mu_{(n)} \approx \mu_{t_n}\) is first-order accurate under appropriate assumptions on \(F\).
The following result makes this precise.

\begin{thm}\label{thm:w2-euler-global-error}
    Let \(F:[0,T]\times \cP_2(\R^d) \to (\R^d)^{\R^d} = \{\textnormal{functions }\R^d\to\R^d\}\) be a map such that \(F(t,\mu) \in T_\mu \W_2\) for all \(t\in[0,T]\) and \(\mu\in\cP_2(\R^d)\).
    Suppose \(F\) satisfies the following Lipschitz conditions:
    There exist \(L_1,L_2\geq 0\) such that for all \(t\in[0,T]\), \(x_1,x_2\in\R^d\), and \(\mu_1,\mu_2\in\cP_2(\R^d)\),
    \[
        \|F(t,\mu)(x_1) - F(t,\mu)(x_2)\|_2 \leq L_1\|x_1-x_2\|_2 \quad \text{ and }
    \]
    \[
        \|F(t,\mu_1) - F(t,\mu_2)\|_{L^2(\mu_i)} \leq L_2W_2(\mu_1,\mu_2) \quad (i=1,2).
    \]
    If \(\mu:[0,T]\to \cP_2(\R^d)\) is an absolutely continuous curve solving the continuity equation \eqref{eqn:wp-ode} on $(0,T)$ in the sense of distributions and the function \((t,x)\mapsto \bv_t(x) \defeq F(t,\mu_t)(x)\) is (jointly) \(C^1\), then for \(\mu_0\)-a.e.\ \(x\in\R^d\), the characteristic ODE
    \[
        \gamma_x(0) = x, \qquad \frac{d}{dt}\gamma_x(t) = \bv_t(\gamma_x(t))
    \]
    admits a unique \(C^2\) solution.
    Furthermore, if
    \[
        M \defeq \left(\int_{\R^d}\max_{r\in[0,T]}\|\gamma_x''(r)\|_2^2\,d\mu_0(x)\right)^\frac12 < \infty,
    \]
    then for a time step \(H>0\) and discrete times \(t_n \defeq nH\), the sequence of Euler approximations iteratively defined by
    \[
        \mu_{(0)} \defeq \mu_0, \qquad \mu_{(n+1)} \defeq (\id + HF(t_n,\mu_{(n)}))_\#\mu_{(n)}
    \]
    satisfies
    \[
        W_2(\mu_{(n)}, \mu_{t_n}) \leq \frac{HM}{2(L_1+L_2)}\left(e^{nH(L_1+L_2)}-1\right)
    \]
    for all $n$ such that $t_n = nH \leq T$.
\end{thm}

The rest of this section is devoted to proving this result.
To begin, observe that \eqref{eqn:otprojs-firstorder} says the local truncation error of Euler's method on the Wasserstein manifold is \(o(H)\).
We now show that under our regularity assumptions on \(F\), this method has local truncation error \(\cO(H^2)\).
This is the same as the local truncation error of Euler's method for curves in Euclidean space with bounded second (e.g., see (6.2.5) of \cite{Atk89}). 

\begin{lem}[Lemma 8.1.4 and Proposition 8.1.8 in \cite{AGS08}] \label{lem:cty-soln-rep}
    Suppose \(\mu:[0,T]\to \cP_2(\R^d)\) is an absolutely continuous curve satisfying \eqref{eqn:conteqn} on $(0,T)$ with velocity field \(\bv\) satisfying
    \begin{equation}\label{eqn:8dot1dot2}
        \int_0^T\int_{\R^d}\|\bv_t(x)\|_2\,d\mu_t(x)dt < \infty
    \end{equation}
    and
    \begin{equation}\label{eqn:8dot1dot8}
        \int_0^T\left(\sup_{x\in B}\|\bv_t(x)\|_2 + \operatorname{Lip}(\bv_t,B)\right)\,dt < \infty
    \end{equation}
    for all compact set \(B\subset \R^d\).
    Then for \(\mu_0\)-a.e.\ \(x\in\R^d\), the ODE
    \begin{equation}\label{eqn:charode}
        \gamma_x(0) = x, \qquad \frac{d}{dt}\gamma_x(t) = \bv_t(\gamma_x(t))
    \end{equation}
    admits a unique solution defined on \([0,T]\), and
    \begin{equation}
        \mu_t = (T_t)_\#\mu_0 \qquad\text{ for all } t\in[0,T],
    \end{equation}
    where \(T_t(x) \defeq \gamma_x(t)\).
\end{lem}

\begin{lem}\label{lem:lte-second-order}
    Assume the hypotheses of Lemma \ref{lem:cty-soln-rep}.
    Fix \(t\in[0,T]\), let \(H>0\) be such that \(t+H \leq T\), and let \(\gamma_x:[t,t+H]\to\R^d\) be the solution to
    \[
        \gamma_x(t) = x, \qquad \gamma_x'(s) = \frac{d}{ds}\gamma_x(s) = \bv_s(\gamma_x(s))
    \]
    If \(\gamma_x\) is \(C^2\) for \(\mu_t\)-a.e.\ \(x\in\R^d\) and
    \[
        \int_{\R^d} \max_{r\in[t,t+H]}\|\gamma_x''(r)\|_2^2\,d\mu_t(x)<\infty,
    \]
    then
    \[
        W_2(\mu_{t+H}, (\id+H\bv_t)_\#\mu_t) \leq \frac{H^2}{2}\left(\int_{\R^d}\max_{r\in[t,t+H]}\|\gamma_x''(r)\|_2^2\,d\mu_t(x)\right)^\frac12.
    \]
\end{lem}

\begin{proof}
    Define \(T_s:\R^d\to\R^d\) by \(T_s(x) = \gamma_x(s)\).
    By Lemma \ref{lem:cty-soln-rep}, $\mu_{t+H} = (T_{t+H})_\#\mu_t$, so
    \begin{align*}
        W_2^2(\mu_{t+H}, (\id+H\bv_t)_\#\mu_t) &\leq \int_{\R^d} \|T_{t+H}(x) - (\id+H\bv_t)(x)\|_2^2\,d\mu_t(x) \\
        &= \int_{\R^d}\|\gamma_x(t+H) - [x+H\bv_t(x)]\|_2^2\,d\mu_t(x).
    \end{align*}
    If \(\gamma_x\) is \(C^2\) for a particular \(x\in\R^d\), then
    \begin{align*}
        \gamma_x(t+H) &= \gamma_x(t) + H\gamma_x'(t) + \int_t^{t+H}\int_t^s \gamma''(r)\,drds \\
        &= x + H\bv_t(x) + \int_t^{t+H}\int_t^s \gamma''(r)\,drds.
    \end{align*}
    Hence, if \(\gamma_x\) is \(C^2\) for \(\mu_t\)-a.e.\ \(x\in\R^d\), then
    \begin{align*}
        W_2^2(\mu_{t+H}, (\id+H\bv_t)_\# \mu_t) &\leq  \int_{\R^d}\left\|\int_t^{t+H}\int_t^s\gamma_x''(r)\,drds\right\|_2^2d\mu_t(x) \\
        &\leq \int_{\R^d}\frac{H^4}{4}\max_{r\in[t,t+H]}\|\gamma_x''(r)\|_2^2\,d\mu_t(x).\qedhere
    \end{align*}
\end{proof}

As in the Euclidean case, the fact that the local truncation error of Euler's method on the Wasserstein manifold is \(\cO(H^2)\) implies that, under our Lipschitz conditions on \(F\), the global error is first order.
This is exactly the content of Theorem \ref{thm:w2-euler-global-error}.

\begin{proof}[Proof of Theorem \ref{thm:w2-euler-global-error}]
    We first prove that the ODE \eqref{eqn:charode} admits a unique solution on \([0,T]\) for \(\mu_0\)-a.e.\ \(x\in\R^d\).
    By Lemma \ref{lem:cty-soln-rep}, it suffices to show that \(\bv_t\defeq F(t,\mu_t)\) satisfies \eqref{eqn:8dot1dot2} and \eqref{eqn:8dot1dot8}.
    For the first, note that
    \begin{align*}
        \int_0^T\int_{\R^d}\|\bv_t(x)\|_2\,d\mu_t(x)dt &\leq \int_0^T\int_{\R^d}\left[\|\bv_t(0)\|_2 + \|\bv_t(x) - \bv_t(0)\|_2\right]d\mu_t(x)dt \\
        &\leq \int_0^T\int_{\R^d}\left[\|\bv_t(0)\|_2 + L_1\|x\|_2\right]d\mu_t(x)dt \\
        &= \int_0^T\|\bv_t(0)\|_2\,dt + L_1\int_0^T\int_{\R^d}\|x\|_2\,d\mu_t(x)dt.
    \end{align*}
    We assume \((t,x)\mapsto \bv_t(x)\) is \(C^1\), so
    \[
        \int_0^T\|\bv_t(0)\|_2\,dt < \infty,
    \]
    and \(t\mapsto \int_{\R^d}\|x\|_2\,d\mu_t(x)\) is continuous (because \(t\mapsto \mu_t\) is continuous with respect to \(W_2\) by hypothesis and \(\mu\mapsto \int_{\R^d}\|x\|_2\,d\mu(x)\) is continuous with respect to \(W_2\) because \(\|x\|\) grows less than quadratically (see Remark 7.1.11 in \cite{AGS08})), so
    \[
        \int_0^T\int_{\R^d}\|x\|_2\,d\mu_t(x)dt < \infty.
    \]
    Hence, we have \eqref{eqn:8dot1dot2}.
    For \eqref{eqn:8dot1dot8}, if \(B\subset \R^d\) is compact, then
    \begin{align*}
        \int_0^T\left(\sup_{x\in B}\|\bv_t(x)\|_2 + \operatorname{Lip}(\bv_t,B)\right)dt &\leq T\left(\sup_{t\in[0,T],x\in B}\|\bv_t(x)\|_2 + L_1\right) < \infty
    \end{align*}
    again because \((t,x)\mapsto \bv_t(x)\) is \(C^1\).
    By Lemma \ref{lem:cty-soln-rep}, the ODE \eqref{eqn:charode} has a unique globally defined solution \(\gamma_x\) for \(\mu_0\)-a.e.\ \(x\in\R^d\), and since \((x,t)\mapsto \bv_t(x)\) is \(C^1\), the map \(t\mapsto \gamma_x(t)\) is \(C^2\) for such \(x\) (since its second derivative is \(\gamma_x''(t) = \frac{d}{dt}\bv_t(\gamma_x(t))\), which is continuous by \(C^1\) of \(\bv_t(x)\)).
    
    Now, for
    \[
        M \defeq \left(\int_{\R^d}\max_{r\in[0,T]}\|\gamma_x''(r)\|_2^2\,d\mu_0(x)\right)^\frac12 < \infty,
    \]
    we will show
    \begin{equation}\label{eqn:w2-euler-global-iterative}
        W_2(\mu_{(n+1)}, \mu_{t_{n+1}}) \leq (1+H(L_1+L_2))W_2(\mu_{(n)},\mu_{t_n}) + \frac{H^2}{2}M.
    \end{equation}
    By the triangle inequality for \(W_2\), we have
    \begin{align*}
        W_2(\mu_{(n+1)},\mu_{t_{n+1}}) &= W_2((\id+HF(t_n, \mu_{(n)}))_\#\mu_{(n)}, \mu_{t_{n+1}}) \\
        &\leq W_2((\id+HF(t_n, \mu_{(n)}))_\#\mu_{(n)}, (\id+HF(t_n, \mu_{t_n}))_\#\mu_{(n)}) \tag{\(*\)} \\
        &+ W_2((\id+HF(t_n, \mu_{t_n}))_\#\mu_{(n)}, (\id+HF(t_n, \mu_{t_n}))_\#\mu_{t_n}) \tag{\(\dag\)} \\
        &+ W_2((\id+HF(t_n, \mu_{t_n}))_\#\mu_{t_n}, \mu_{t_{n+1}}). \tag{\(\ddag\)} \\
    \end{align*}
    We will handle each term separately.
    For \((*)\), note that for any \(\mu\in\cP_2(\R^d)\) and any maps \(T,S:\R^d\to\R^d\), the measure \(\gamma = (T,S)_\#\mu\) is a (generally suboptimal) coupling of \(T_\#\mu\) and \(S_\#\mu\), and so
    \[
        W_2(T_\#\mu, S_\#\mu) \leq \left(\int_{\R^d}\|T(x) - S(x)\|^2\,d\mu(x)\right)^{\frac12}.
    \]
    Thus, we have
    \begin{align*}
        (*) &\leq \left(\int_{\R^d}\left\|(\id+HF(t_n, \mu_{(n)}))(x) - (\id+HF(t_n, \mu_{t_n}))(x)\right\|^2\,d\mu_{(n)}(x)\right)^{\frac12} \\
        &= \|(\id+HF(t_n, \mu_{(n)})) - (\id+HF(t_n, \mu_{t_n}))\|_{L^2(\mu_{(n)})} \\
        &= H\|F(t_n,\mu_{(n)}) - F(t_n, \mu_{t_n})\|_{L^2(\mu_{(n)})} \\
        &\leq HL_2W_2(\mu_{(n)},\mu_{t_n}).
    \end{align*}
    
    For \((\dagger)\), note that for any Lipschitz map \(T:\R^d\to\R^d\) and any measures \(\sigma,\mu\in\cP_2(\R^d)\) with optimal coupling \(\gamma\), we have that \((T\times T)_\#\gamma\) is a coupling of \(T_\#\sigma\) and \(T_\#\mu\), and so
    \begin{align*}
        W_2(T_\#\sigma, T_\#\mu) &\leq \left(\int_{\R^d\times \R^d}\|T(x) - T(y)\|^2\,d\gamma(x,y)\right)^{\frac12} \\
        &\leq \|T\|_{\operatorname{Lip}}\left(\int_{\R^d\times\R^d}\|x-y\|\,d\gamma(x,y)\right)^{\frac12} \\
        &= \|T\|_{\operatorname{Lip}}W_2(\sigma,\mu).
    \end{align*}
    Hence
    \begin{align*}
        (\dag) &\leq \|\id + HF(t_n, \mu_{t_n})\|_{\operatorname{Lip}}W_2(\mu_{(n)}, \mu_{t_n}) \\
        &\leq (1 + HL_1)W_2(\mu_{(n)}, \mu_{t_n}).
    \end{align*}
    
    Finally, for \((\ddag)\), we have
    \begin{align*}
        (\ddag) &\leq \frac{H^2}{2}M
    \end{align*}
    by Lemma \ref{lem:lte-second-order}.
    
    Now, we recall the following elementary fact: If $\beta \geq 0$, $\alpha > 0$, and \((a_n)_{n \in \N}\) is a sequence of nonnegative numbers satisfying the recursive property that \(a_{n+1} \leq (1+\alpha)a_n + \beta\), then
    \[
        a_n \leq e^{n\alpha}\left(a_0 + \frac\beta\alpha\right) - \frac\beta\alpha.
    \]
    Applying this to (\ref{eqn:w2-euler-global-iterative}) with \(\alpha = H(L_1 + L_2)\) and \(\beta = \frac{H^2}{2}M\) gives
    \[
        W_2(\mu_{(n)}, \mu_{t_n}) \leq \frac{HM}{2(L_1+L_2)}\left(e^{nH(L_1+L_2)}-1\right),
    \]
    as desired.
\end{proof}

\begin{rmk}
    Before moving on, it is important to discuss the practical utility of Theorem \ref{thm:w2-euler-global-error} and its similarity to the standard error bound result for ordinary Euler's method in \(\R^d\) (e.g., see (6.2.13) of \cite{Atk89}).
    We note that the role of the Lipschitz constant \(L_2\) is exactly analogous to the role of the Lipschitz constant in the proof of the \(\R^d\) Euler result, and the role of \(L_1\) is to ensure that the vector fields returned by \(F\) are sufficiently regular that the curves \(\gamma_x\) themselves can be well approximated using Euler's method.
    This is also the role of the assumption that \((t,x)\mapsto F(t,\mu_t)(x)\) is jointly \(C^1\) --- this guarantees that the curves \(\gamma_x\) have bounded second derivative, and so the definition of \(M\) is sensible.
    These hypotheses may seem rather restrictive, but there are indeed ``natural'' examples which satisfy them.
    
    As an example, consider the standard SDE \(dX_t = dW_t\) with initial condition \(X_0 \sim \mu_0 = \delta_0\), for which the distribution associated with \(X_t\) is known to be \(\mu_t = \cN(0,t)\).
    Since optimal transport maps between Gaussian distributions has a closed form, we can use \eqref{eqn:otderivs-firstorder} to compute the minimal-energy velocity flow field \(\bv_t\) in the continuity equation:
    \begin{equation}\label{eqn:gaussian-true-vf}
        \bv_t(x) = \lim_{h\to 0}\frac{T_{\mu_t}^{\mu_{t+h}}(x) - x}{h} = \lim_{h\to 0}\frac{\sqrt{\frac{t+h}{t}}-1}{h}x = \frac{x}{2t}.
    \end{equation}
    Thus, we can recover pure Gaussian diffusion from the setup in Theorem \ref{thm:w2-euler-global-error} by setting \(F(t,\mu)(x) = \frac{x}{2t}\) for \(t>0\) (any \(F\) which satisfies \(F(t,\cN(0,t)) = \frac{x}{2t}\) will yield the correct evolution).
    Note that for \(t>0\), \(F\) is jointly \(C^1\) and has Lipschitz constants \(L_1 = \frac{1}{t}\) and \(L_2 = 0\).

    However, for \(t=0\), we must be more careful.
    For the initial distribution \(\mu_0 = \delta_0\), no velocity flow field exists which will ``separate'' the mass and create a continuous distribution --- any velocity flow field will cause the point-mass to flow as a single point, rather than evolve as a distribution with density.
    In this case, Theorem \ref{thm:w2-euler-global-error} still holds, but it reduces to the standard bounds on Euler's method for an ODE on a single particle.
    To exactly mimic the evolution \(\mu_t = \cN(0,t)\), we must start with some initial distribution \(\mu_{t_0} = \cN(0,t_0)\) for \(t_0>0\), in which case the hypotheses are indeed satisfied for all \(t>t_0\).
    
    The example of standard Gaussian diffusion is perhaps the simplest, and so one might wish to check that the hypotheses are also satisfied in some more interesting examples.
    Unfortunately, even for well understood systems, it is often difficult to obtain an explicit expression for the minimal-energy velocity flow field \(\bv_t\) that satisfies the continuity equation
    \[
        \del_t\mu_t + \nabla\cdot(\bv_t\mu_t) = 0,
    \]
    and so it is difficult to write down a function \(F\) that yields the evolution \(\mu_t\).

    For example, consider an SDE of the form \(dX_t = v(X_t)dt + dW_t\).
    Even though it is well known that the distribution \(\mu_t\) associated with \(X_t\) evolves according to the Fokker-Planck equation
    \[
        \del_t\mu_t + \nabla\cdot(v\mu_t) - \frac12\Delta\mu_t = 0,
    \]
    the drift velocity field \(v\) in the SDE is in general not the same as the velocity flow field \(\bv_t\) in the continuity equation (e.g., see the above example where \(v = 0\) but \(\bv_t = \frac{x}{2t}\). 
    This means that \(\bv_t\) actually does have to be computed via optimal transport maps as in \eqref{eqn:otderivs-firstorder}, and since optimal transport maps between arbitrary continuous distributions do not have closed-form expressions in general, neither will the velocity flow field \(\bv_t\).
    (It is true that one can use the Fokker-Planck equation to find \emph{some} velocity field which solves the continuity equation with \(\mu_t\), but finding the minimal-energy one --- and hence the appropriate value of \(F\) --- requires solving an optimization problem that is essentially equivalent to solving for the optimal transport map.)
    Thus, even when \(\mu_t\) has a nice expression (as can be the case for SDEs), the velocity flow field \(\bv_t\) may not, and so the hypotheses are difficult to explicitly check in practice.

    Thus, we wish to be clear that our goal in presenting these results is not so that we can directly apply them to ``real world'' examples.
    Instead, we wish to provide theoretical motivation for the discrete algorithm we present in Section \ref{sec:discrete}, to elucidate the analogy to ordinary Euler's method, and to demonstrate that similar guarantees are, at least in theory, possible to obtain.
    In this way, the primary takeaway of the paper is not that our algorithm can be \emph{guaranteed} to work on a particular system \emph{a priori}, but rather that it empirically performs well on example systems for which it is believable that the appropriate hypotheses hold, even if one cannot check them explicitly.
\end{rmk}

\subsection{Euler's Method with Imperfect Information}\label{subsec:otvfs-finite-difs}

The above discussion shows that, under sufficient regularity assumptions, Euler's method on the Wasserstein manifold can approximate solutions to the continuity equation with tangent field prescribed by a ``vector field'' \(F\).
However, as discussed in Section \ref{subsec:Euclideanpooreuler}, we rarely have access to \(F\) in practice, so we cannot explicitly compute the velocity field to be used in Euler's method.
Instead, we will only have access to some oracle \(E_h\) that, given a time \(t\) and a measure \(\mu\), will return an approximation of what that measure would be at time \(t+h\) if allowed to evolve according to \(F\) for some small time step \(h\).
That is, if \(\mu\) is a solution to the continuity equation with \(\bv_t = F(t,\mu_t)\), then the oracle is such that
\[
    E_h(t,\mu_t) \approx \mu_{t+h}.
\]
To perform an Euler-type method, then, we must first approximate the appropriate velocity field, and \eqref{eqn:otderivs-firstorder} hints that we should approximate it with
\[
     \bv_t^h(x) \defeq \frac{T_{\mu_t}^{E_h(t,\mu_t)}(x)-x}{h} \approx \frac{T_{\mu_t}^{\mu_{t+h}}(x) - x}{h}.
\]
Unfortunately, even if \(E_h(\mu_t) = \mu_{t+h}\) exactly, the \(L^2\) convergence in \eqref{eqn:otderivs-firstorder} is not enough to guarantee that \(\bv_t^h(x)\to \bv_t(x)\) pointwise as \(h\to 0\).
However, the \(L^2\) convergence is still good enough to give a reasonable bound on the local truncation error of using this finite difference approximation in Euler's method.

\begin{cor}\label{cor:ptwise}
    Suppose \(\mu_t \ll \cL^d\) and \(h>0\). Set
    \begin{equation*}
        \bv_t^h (x) \defeq \frac{T_{\mu_t}^{E_h(t,\mu_t)}(x) - x}{h}.
    \end{equation*}
    Then
    \begin{equation}\label{eqn:ptwiseapproxs}
        W_2(\mu_{t+H}, (\id + H\bv_t^h)_{\#}\mu_t) \leq H\|\bv_t - \bv_t^h\|_{L^2(\mu_t)} + o(H)
    \end{equation}
    as \(H\to 0\), and if \(E_h(t,\mu_t) = \mu_{t+h}\), then \(\|\bv_t - \bv_t^h\|_{L^2(\mu_t)}\to 0\) as \(h\to 0\).
\end{cor}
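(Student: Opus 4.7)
The plan is to follow exactly the Euclidean template of Corollary \ref{cor:euler}: insert the ``true-tangent'' approximation $(\id + H\bv_t)_{\#}\mu_t$ as an intermediate point and apply the triangle inequality for $W_p$. This splits the target quantity into a piece controlled by Theorem \ref{thm:wpeuler} and a piece controlled by a direct pushforward estimate.

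First I would write
\[
    W_p\bigl(\mu_{t+H},\,(\id+H\bv_t^h)_{\#}\mu_t\bigr) \leq W_p\bigl(\mu_{t+H},\,(\id+H\bv_t)_{\#}\mu_t\bigr) + W_p\bigl((\id+H\bv_t)_{\#}\mu_t,\,(\id+H\bv_t^h)_{\#}\mu_t\bigr).
\]
The first term is exactly $o(H)$ as $H\to 0$ by (\ref{eqn:otprojs-firstorder}) in Theorem \ref{thm:wpeuler} (with $h$ there replaced by $H$), which is the statement that the exponential map in the direction of $\bv_t$ gives a first-order accurate approximation to the curve at $\mu_t$.

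For the second term, the standard observation is that pushing $\mu_t$ forward by the pair map $(\id+H\bv_t,\,\id+H\bv_t^h)$ produces a (generally non-optimal) coupling of the two pushforward measures. Taking this specific coupling in the Kantorovich formulation of $W_p$ immediately yields
\[
    W_p^p\bigl((\id+H\bv_t)_{\#}\mu_t,\,(\id+H\bv_t^h)_{\#}\mu_t\bigr) \leq \int_{\R^d}\|H\bv_t(x)-H\bv_t^h(x)\|_p^p\,d\mu_t(x) = H^p\,\|\bv_t-\bv_t^h\|_{L^p(\mu_t)}^p,
\]
and hence $W_p\bigl((\id+H\bv_t)_{\#}\mu_t,\,(\id+H\bv_t^h)_{\#}\mu_t\bigr) \leq H\,\|\bv_t-\bv_t^h\|_{L^p(\mu_t)}$. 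Combining the two bounds proves (\ref{eqn:ptwiseapproxs}). The second assertion, $\|\bv_t-\bv_t^h\|_{L^p(\mu_t)}\to 0$ as $h\to 0$, is simply a restatement of the $L^p(\mu_t)$-convergence in (\ref{eqn:otderivs-firstorder}), which applies because of the assumption $\mu_t\ll\cL^d$.

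There is no real obstacle here; the argument is a routine manipulation analogous to the Euclidean proof of Corollary \ref{cor:euler}. The only points that require a bit of care are (i) checking that $(\id+H\bv_t,\id+H\bv_t^h)_{\#}\mu_t$ is a valid transport plan (which follows from the standard fact that pushing forward by a product map yields a coupling with the correct marginals), and (ii) keeping the $o(H)$ term and the $h$-dependent term conceptually separated — the decay of the first is in $H\to 0$ with $h$ fixed, while the smallness of $\|\bv_t-\bv_t^h\|_{L^p(\mu_t)}$ is a separate $h\to 0$ statement that the user may invoke by choosing $h$ small enough before letting $H\to 0$.
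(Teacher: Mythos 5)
Your proof is correct and follows essentially the same route as the paper: triangle inequality through the intermediate measure \((\id+H\bv_t)_{\#}\mu_t\), the \(o(H)\) bound from (\ref{eqn:otprojs-firstorder}), and the bound \(W_p\bigl((\id+H\bv_t)_{\#}\mu_t,(\id+H\bv_t^h)_{\#}\mu_t\bigr)\leq H\|\bv_t-\bv_t^h\|_{L^p(\mu_t)}\), which the paper states directly as an \(L^p\)-norm estimate on the two maps and which you justify by exhibiting the product-map coupling explicitly. The only difference is that you spell out this standard coupling step, which the paper leaves implicit.
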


\begin{proof}
    First note that, since we assume \(\mu_t\ll \cL^d\), we can apply \eqref{eqn:otderivs-firstorder} to give that \(\|\bv_t-\bv_t^h\|_{L^2(\mu_t)}\to 0\) as \(h\to 0\) when \(E_h(t,\mu_t) = \mu_{t+h}\).
    
    For the inequality, by \eqref{eqn:otprojs-firstorder} we have
    \begin{align*}
        W_2(\mu_{t+H}, (\id + H\bv_t^h)_{\#}\mu_t) &\leq W_2(\mu_{t+H}, (\id + H\bv_t)_{\#}\mu_t) + W_2((\id + H\bv_t)_{\#}\mu_t, (\id + H\bv_t^h)_{\#}\mu_t) \\
        &\leq o(H) + \|(\id + H\bv_t) - (\id + H\bv_t^h)\|_{L^2(\mu_t)} \\
        &= o(H) + H\|\bv_t - \bv_t^h\|_{L^2(\mu_t)}
    \end{align*}
    where in the second line we once again use the fact that for any maps \(T,S:\R^d\to\R^d\), we have \(W_2(T_\#\mu, S_\#\mu) \leq \|T-S\|_{L^2(\mu)}\) because \((T,S)_\#\mu\) gives a (generally suboptimal) coupling between \(T_\#\mu\) and \(S_\#\mu\).
\end{proof}

This is the local truncation error of the Euler-type method
\begin{equation}\label{eqn:wp-euler-approx-fd}
    \mu_{(0)} = \mu_{0}, \qquad \mu_{(n+1)} = (\id + H\bv_t^h)_{\#}\mu_{(n)}.
\end{equation}
As was alluded to in Section \ref{subsec:Euclideanpooreuler}, the local truncation error of this method depends on the accuracy of the approximation of the ``tangent vector'' \(\bv_t = F(t,\mu_t)\).

\section{Euler-type Method for Empirical Measures of Particle Systems}\label{sec:discrete}

The results in Section \ref{sec:otvfs} show that an Euler-type scheme can effectively predict the evolution of a measure-valued curve (under certain conditions) even with incomplete knowledge of the underlying ``dynamics'' governing the curve's evolution.
In practical applications, one is often interested in predicting the evolution of (random) systems of particles \(\{x_i(t)\}_{i=1}^N\) in \(\R^d\).
In such cases, the measure-valued curve one seeks to predict is the empirical measure
\[
    t \mapsto \mu_t^N \defeq \frac1N\sum_{i=1}^N\delta_{x_i(t)},
\]
and the oracle-type information one has is a particle-wise micro-scale simulation.
Using this particle-wise simulation, we can construct an ``oracle operator'' \(E_h\) on discrete measures.
If $\nu = \frac{1}{N}\sum_{i=1}^N \delta_{y_i}$ is a discrete measure, then $E_h(\nu) \coloneqq \frac{1}{N}\sum_{i=1}^N \delta_{z_i}$, where $\{z_i\}_{i=1}^N$ is the output of running the micro-scale simulation for time \(h\) initialized with particles at the locations $\{y_i\}_{i=1}^N$.
With this oracle in hand, we may hope to apply the Euler-type method described in Section \ref{subsec:otvfs-finite-difs} to $\mu^N$.
Specifically, if
\begin{align}
    \bv_t^{h,N}(x_i(t)) & \defeq \frac{T_{\mu_t^N}^{E_h(\mu_t^N)}(x_i(t)) - x_i(t)}{h} \; \qquad\qquad i=1,\ldots,N, \label{eqn:discrete-findif-vf} \\
     \widetilde{\mu}_{t+H}^N & \defeq  \left(\id + H\bv_t^{h,N}\right)_{\#}\mu_t^N = \frac1N\sum_{i=1}^N \delta_{x_i(t) + H\bv_t^{h,N}(x_i(t))},\label{eqn:wp-findif-euler-onestep}
\end{align}
then we may hope that $\widetilde{\mu}_{t+H}^N \approx \mu_{t+H}^N$.
Unfortunately, none of the theory from Section \ref{sec:otvfs} applies in this setting:
$\mu_t^N$ clearly does not have a density, and in many systems of interest, the particles' evolution is too rough in time for the curve $\mu^N$ to be absolutely continuous.
(Suppose $x_i$ is, e.g., a Brownian motion.)
In particular, $\mu^N$ has no tangent field, so there is no reason to believe that $\bv^{h,N}$ can tell us anything about the true value $\mu_{t+H}^N$.
The key idea of our examples is to sidestep this major issue by assuming that $\mu^N$ is an empirical estimate of a ``nice'' curve $\mu$ with velocity field $\bv$.
In this case, $\mu^N \approx \mu$ for large $N$, and the hope is that $\bv^{h,N} \approx \bv$ in such a way that ensures $\widetilde{\mu}_{t+H}^N \approx \mu_{t+H}^N$.
While work on proving precise results of this form is ongoing, we give empirical evidence that the approach works in Sections \ref{sec:chemotaxis}, \ref{sec:burgers}, and \ref{sec:halfmoon}.
At a heuristic level, even though no tangent field describes the evolution of \(\mu_t^N\) on short time scales, there is still a notion of a ``direction'' --- determined by $\mu$ --- in which the \textit{group} of particles is traveling over longer time scales.
The idea is that optimal transport maps allow us to access that direction.

\begin{figure}[tbp]
    \centering{\includegraphics[width=0.8\textwidth]{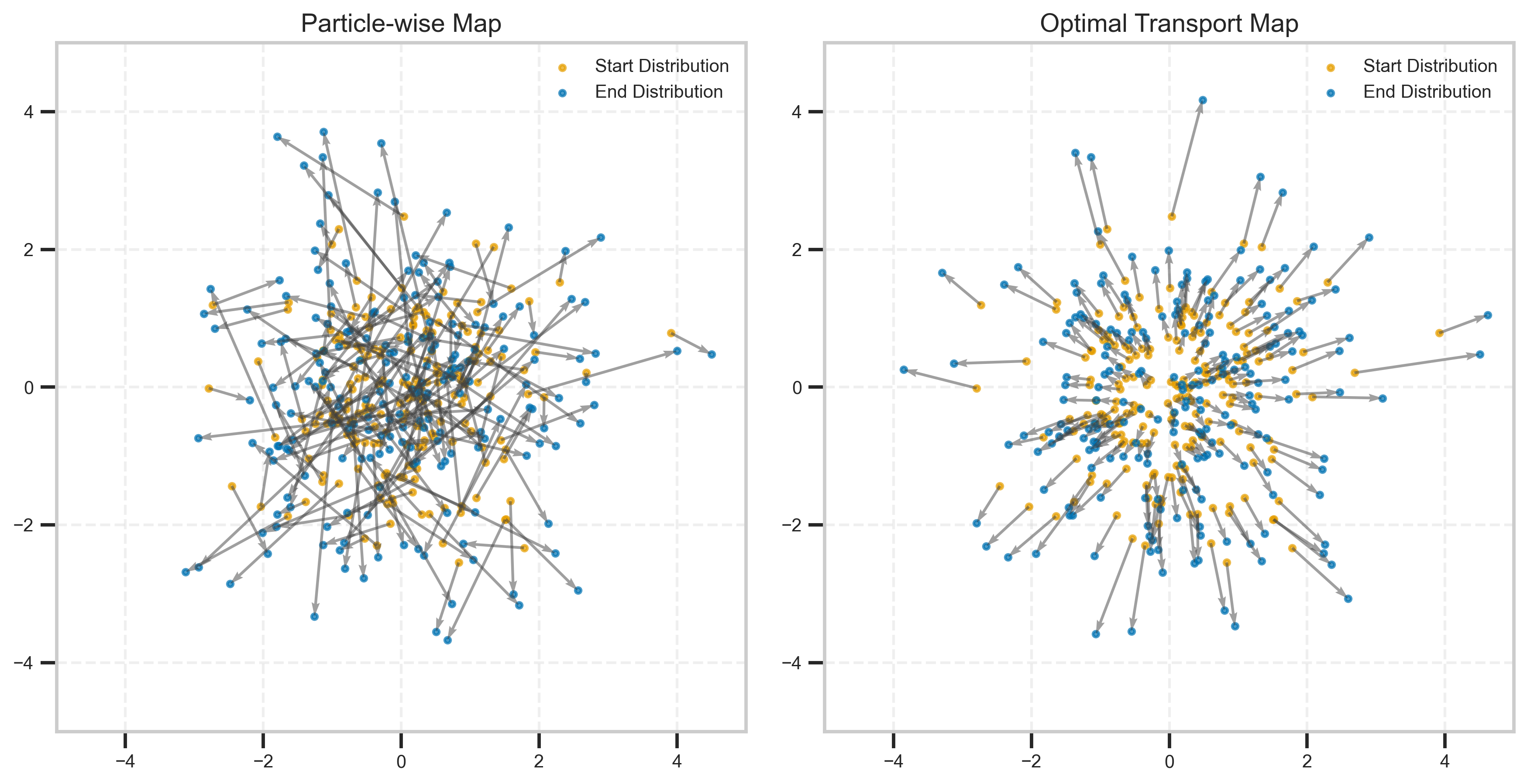}}
    \caption{
    Example of the difference between the optimal transport map and the particle-wise map.
    In the particle-wise image (left), an arrow is drawn to connect each particle's starting position to its ending position.
    In the optimal transport image (right), we compute the OT map between the starting and ending distributions and draw an arrow connecting each particle's starting location to its corresponding output under the OT map.
    }
    \label{fig:miscfigs-OT-vs-Particles}
\end{figure}

As an illustrative example, suppose the $x_i$ are independent, standard $d$-dimensional Brownian motions, the smooth curve ``in the background'' is $t \mapsto \mu_t \coloneqq N(0,tI_d)$, and the oracle is given by an Euler--Maruyama simulation of Gaussian diffusion:
\begin{equation}\label{eqn:discrete-diffusion-simulator-oracle}
    E_h(\nu) = \frac1N\sum_{i=1}^N\delta_{y_i+\sqrt{h} Z_i},
\end{equation}
where $\nu = \frac1N\sum_{i=1}^N \delta_{y_i}$ and $Z_1,\ldots,Z_N \sim \cN(0,I_d)$ are i.i.d.
By Theorem \ref{thm:vectorfields}\ref{item:wp-first-order-Taylor}, the tangent field $\bv$ of $\mu$ is given by
\begin{equation}\label{eqn:gaussian-true-vf-2}
        \bv_t(x) = \lim_{h\to 0}\frac{T_{\mu_t}^{\mu_{t+h}}(x) - x}{h} = \lim_{h\to 0}\frac{\sqrt{\frac{t+h}{t}}-1}{h}x = \frac{x}{2t}.
    \end{equation}
Now, see Figure \ref{fig:miscfigs-OT-vs-Particles}, in which $d=2$.
In the first image, an arrow is drawn from $x_i(1)$ to $x_i(1)+Z_i$, and the arrows are expectedly chaotic.
In contrast, the arrows in the second image are drawn from $x_i(1)$ to $x_i(1) + \bv_1^{1,N}(x_i(1))$ in accordance with our measure-valued Euler-type method, which reveals the obvious structure of the evolution:
The bulk is drifting away from the center in all directions, which agrees with what \eqref{eqn:gaussian-true-vf} suggests.

\subsection{Choosing an Appropriate Step Size}\label{subsec:discrete-choosing-step-size}

In the ``smooth'' case, the error of the approximation $\mu_{t+H} \approx (\id + H\bv_t^h)_{\#}\mu_t$ generically decreases as \(h\) decreases.
In the discrete case, however, the roughness of our particles' evolutions introduces a competing need to keep \(h\) sufficiently large.
To explain why, recall from Proposition \ref{prop:permutation} that optimal transport maps in the discrete case are given by permutations.
Assuming (as we always will) that the particles $\{x_i\}_{i=1}^N$ evolve continuously in time, if $h$ is small enough, then the relevant permutation becomes the identity.
More specifically, if $h$ is very small, then running our micro-scale simulation for time $h$ initialized at $x_i(t)$ will result in a particle $y_i^h \approx x_i(t+h)$ that is very close to $x_i(t)$.
In this case, the map $x_i(t) \mapsto y_i^h$ will optimally transport $\mu_t^N$ to $E_h(\mu_t^N)$.
Therefore,
\begin{align*}
    \bv_t^{h,N}(x_i(t)) & = \frac{T_{\mu_t^N}^{E_h(\mu_t^N)}(x_i(t)) - x_i(t)}{h}  = \frac{y_i^h - x_i(t)}{h} \approx \frac{x_i(t+h)-x_i(t)}{h}, \\
    \widetilde{\mu}_{t+H}^N &  = \frac1N\sum_{i=1}^N \delta_{x_i(t) + H\bv_t^{h,N}(x_i(t))} \approx \frac1N\sum_{i=1}^N \delta_{x_i(t) + H\frac{x_i(t+h)-x_i(t)}{h}}.
\end{align*}
Since $x_i$ is not necessarily differentiable at $t$, $x_i(t) + H\frac{x_i(t+h)-x_i(t)}{h}$ is not necessarily a good approximation of $x_i(t+H)$, so $\widetilde{\mu}_{t+H}^N$ is not necessarily a good approximation of $\mu_{t+H}^N$ if $h$ is too small.

To empirically verify the intuition that small choices of \(h\) result in poor approximations, we return to the case of Brownian motion.
Specifically, suppose again that the $x_i$ are independent Brownian motions in $\R^d$, $\mu_t = \cN(0,tI_d)$, $\bv_t(x) = \frac{x}{2t}$, and the oracle is given in terms of an Euler--Maruyama simulation.
We have observed experimentally that the approximation error \(W_2(\mu_{t+H}^N, \widetilde\mu_{t+H}^N)^2\) is strongly correlated with the error \(\|\bv_t - \bv_t^{h,N}\|_{L^2(\mu_t^N)}^2\) in the approximation of the underlying tangent field (Figure \ref{fig:mixingtime-diffusion-correlation-errors}), so we focus the remaining discussion on the latter quantity for computational convenience.

\begin{figure}[t]
    \center{\includegraphics[width=3in]{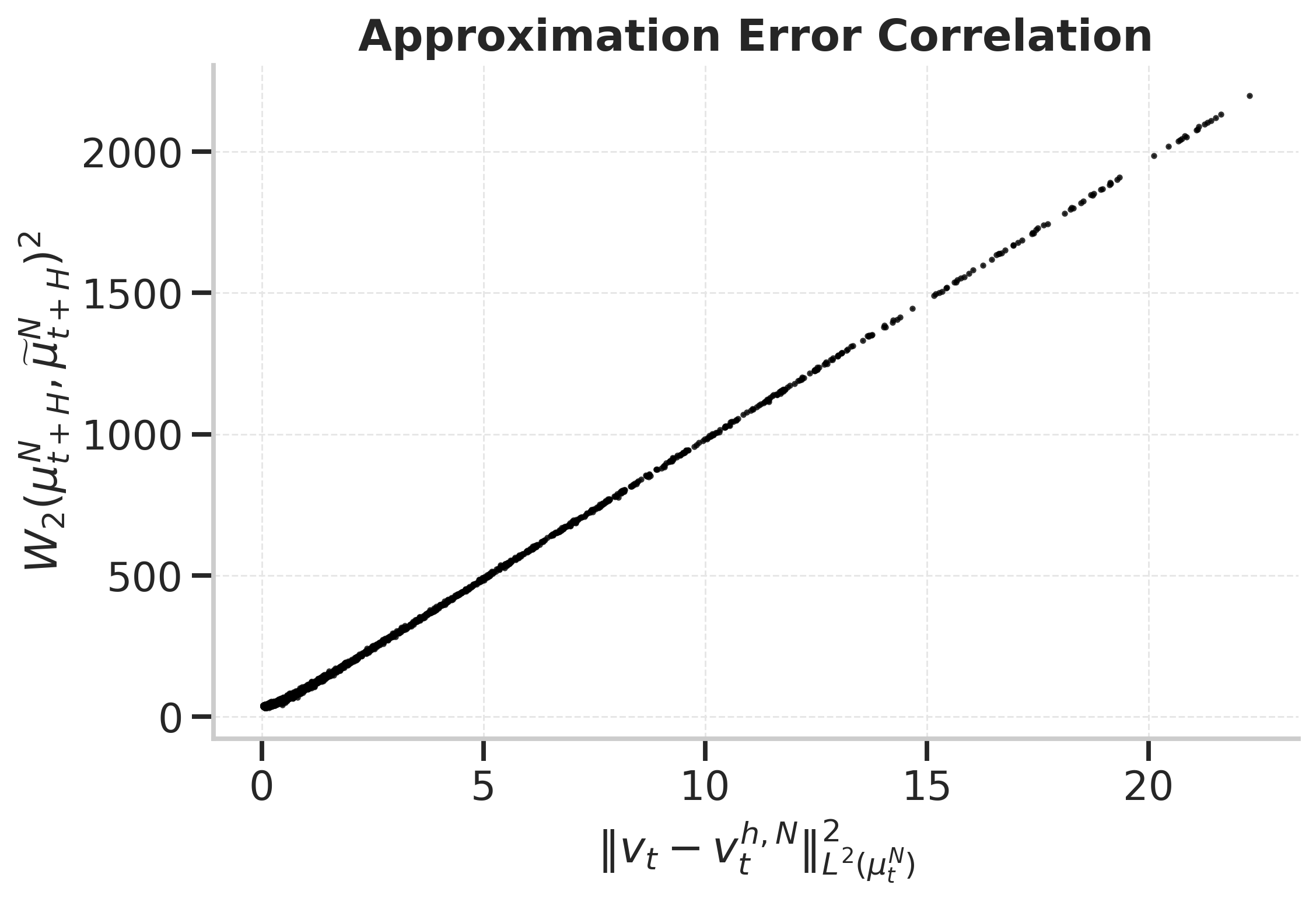}}
    \caption{
        Vector field approximation error \(\|\bv_t - \bv_t^{h,N}\|_{L^2(\mu_t^N)}^2\) versus Euler step approximation error \(W_2(\mu_{t+H}^N, \widetilde\mu_{t+H}^N)^2\) for Brownian motion with \(t = 1\) and \(H = 99\) (\(h\) and \(N\) vary)
    }
    \label{fig:mixingtime-diffusion-correlation-errors}
\end{figure}

\begin{figure}[t]
    \begin{subfigure}[t]{0.49\textwidth}
        \center{\includegraphics[width=\textwidth]{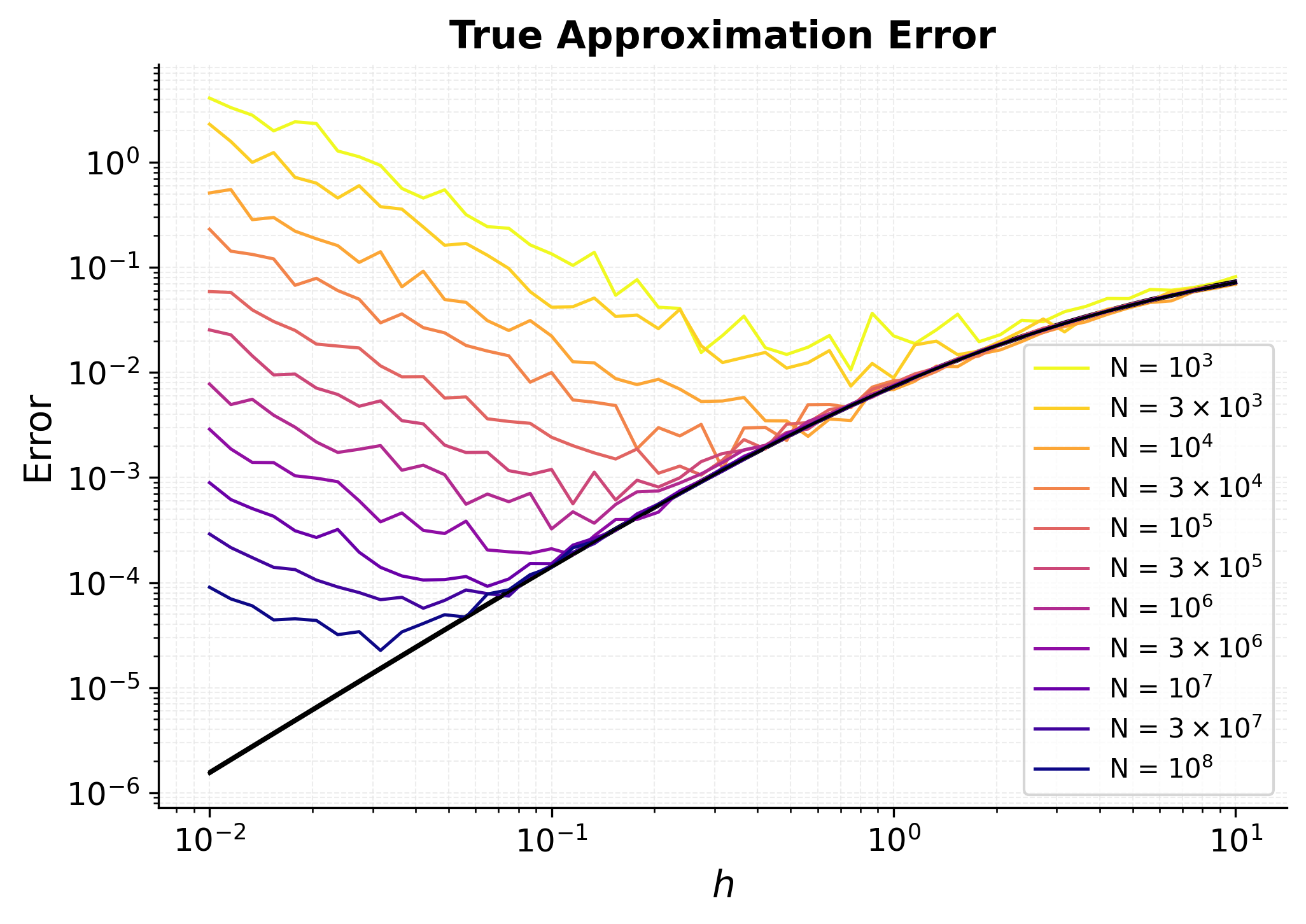}}
        \caption{True approximation error: The black curve is the theoretical finite difference error \(\|\bv_t - \bv_t^h\|^2\) versus the step size \(h\).
        The colored curves are the discrete approximation errors \(\|\bv_t - \bv_t^{h,N}\|^2\) for various values of \(N\).}
        \label{fig:mixingtime-diffusion-vf-error}
    \end{subfigure}
    \hfill
    \begin{subfigure}[t]{0.49\textwidth}
        \center{\includegraphics[width=\textwidth]{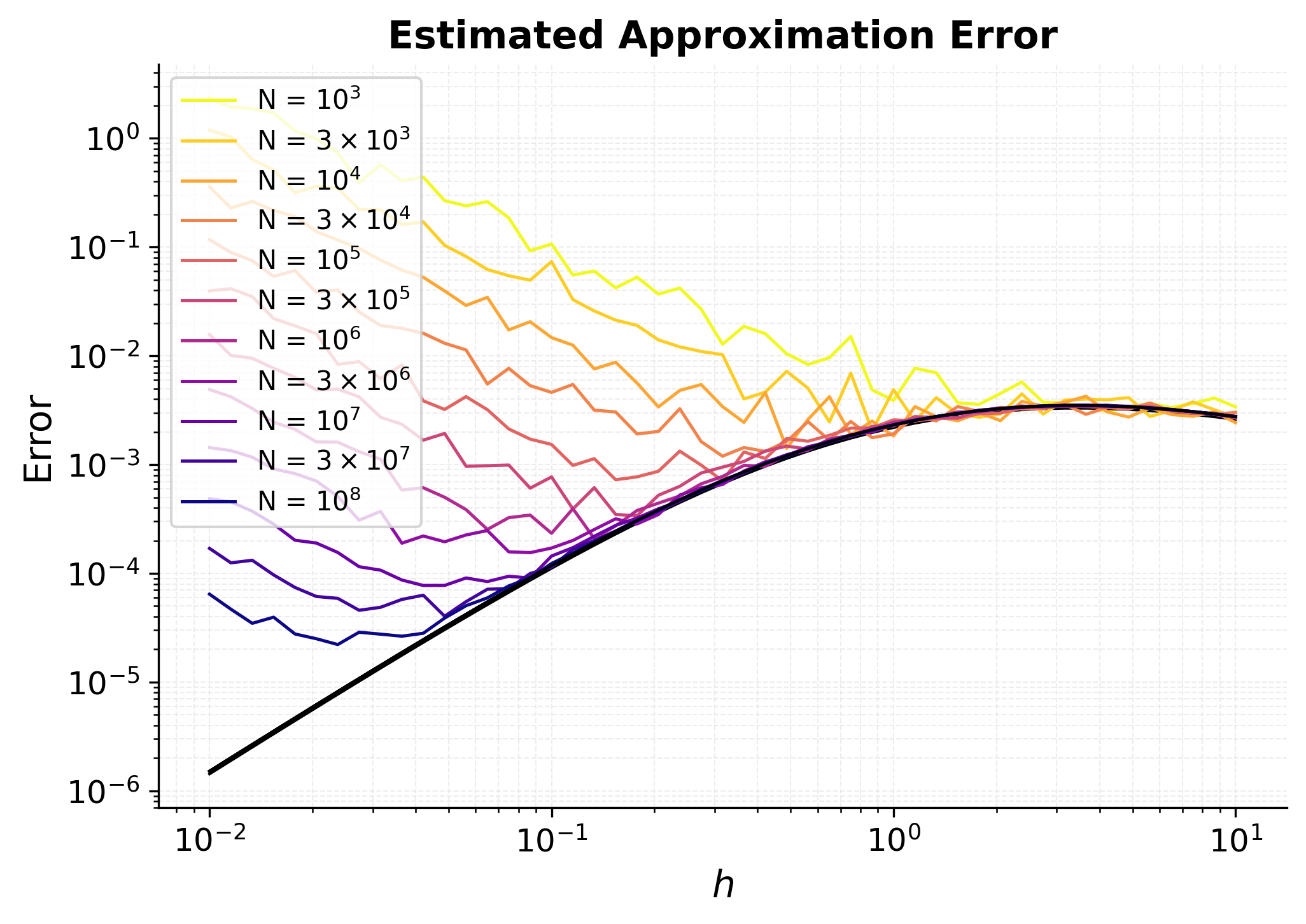}}
        \caption{Estimated approximation error: The black curve is the theoretical finite difference error \(\|\bv_t^h - \bv_t^{2h}\|^2\) versus the step size \(h\).
        The colored curves are the discrete approximation errors \(\|\bv_t^{h,N} - \bv_t^{2h,N}\|^2\) for various values of \(N\).}
        \label{fig:mixingtime-diffusion-vf-difference}
    \end{subfigure}
    \caption{Results of vector field error computations for Brownian motion and comparison with an estimated error using the difference between consecutive approximations.}
\end{figure}

There are two distinct sources of error in the tangent field approximation.
The first is the error in the approximation of a derivative by a finite difference with step \(h>0\); the second is the error in using a discrete optimal transport map to approximate a continuous one.
As suggested above, the first should dominate for large values of \(h\), and the second should dominate for small values of \(h\).
To isolate their effects, we also consider the finite-difference approximation of the tangent field $\bv$,
\[
    \bv_t^h(x) = \frac{T_{\mu_t}^{\mu_{t+h}}(x)-x}{h} = \frac{\sqrt{\frac{t+h}{t}}-1}{h}x,
\]
and compute \(\|\bv_t - \bv_t^h\|_{L^2(\mu_t^N)}\) as a proxy for the error caused by the finite-difference approximation.
In Figure \ref{fig:mixingtime-diffusion-vf-error}, we plot \(\|\bv_1 - \bv_1^h\|_{L^2(\mu_1^N)}\) in black and $\|\bv_1 - \bv_1^{h,N}\|_{L^2(\mu_1^N)}$ for different values of \(h,N\) in various other colors.
We note that for each value of \(N\), there is an approximately optimal value of \(h\) that minimizes the error and that this optimal value of \(h\) decreases as \(N\) increases.
We also see that for very small values of \(h\), the error is very large (as predicted above) and that for larger values of \(h\), the discrete approximation is about as good as the true finite difference approximation.
This indicates that, indeed, as the step size increases, the optimal transport map better captures the distribution's bulk behavior, and the limiting factor on choosing \(h\) too big is the fact that we are approximating a derivative using a finite difference.
While these experiments do not precisely formalize this behavior, they corroborate the assertion that there should be a ``happy-medium'' value of \(h\) that decreases as the number of samples increases.

We expect this phenomenon to appear in most random particle systems one would be interested in studying using optimal transport.
Indeed, there is evidence that this phenomenon is due to a gap in the eigenvalue spectrum of the macroscopic, coarse equation that governs the dynamics ``in the background'' \cite{GK03a,RGK03}.
However, as always, we are considering the setting in which we do not know these ``underlying'' dynamics, and so we cannot \emph{analytically} determine an appropriate value for \(h\).
As such, one might hope to \emph{empirically} determine the optimal choice for \(h\) to use in the prediction algorithm.
Unfortunately, the previous paragraph's analysis was only possible because we had explicit a priori knowledge of $\bv$, which defeats the purpose of approximating it.
A different approach is therefore needed to determine the optimal step size \(h\).
To this end, we perform another experiment to assess the viability of a simple intrinsic criterion: how much the approximation of the vector field changes after another step of size \(h\), i.e., \(\|\bv_1^{h,N} - \bv_1^{2h,N}\|_{L^2(\mu_1^N)}\).
In Figure \ref{fig:mixingtime-diffusion-vf-difference}, we compare \(\|\bv_1^{h,N} - \bv_1^{2h,N}\|_{L^2(\mu_1^N)}\) with \(\|\bv_1^h - \bv_1^{2h}\|_{L^2(\mu_1^N)}\), the ``normal'' difference between two consecutive finite-difference approximations.
While the correspondence is not perfect, the plots are remarkably similar, which suggests the method's potential use.

\begin{rmk}\label{rmk:burst-period-vs-h}
    On a practical note, if the step size of the micro-scale simulator is fixed (e.g., as it is in the model used in Section \ref{sec:chemotaxis}, see Remark \ref{rmk:chemotaxis-step-size-limit}), then one can effectively increase \(h\) by taking \(E_h\) to be the result of applying \emph{multiple} micro-scale steps in succession.
    In that case, we think of \(h\) as a ``burst period'' because we do a short burst of successive micro-scale steps.
    This means we always have the flexibility to increase the effective value of \(h\).
    Thus, it still makes sense to discuss how to tune \(h\) appropriately, even if the micro-scale simulator itself has a fixed time step.
\end{rmk}

In our experiments in Sections \ref{sec:chemotaxis}, \ref{sec:burgers}, and \ref{sec:halfmoon}, we fix this ``burst period'' as a parameter based on data from previous experiments; however, one could easily imagine adaptations to the algorithm in Section \ref{subsec:discrete-algorithm} that dynamically adjust \(h\) using this heuristic.

   %
\subsection{Additional Considerations}\label{subsec:discrete-additional-considerations}

\paragraph{Statistical Averaging}
When working with random particle systems, it is often advantageous to replace the single finite difference $\bv_t^h$ with an average of many finite differences.
There are many ways to do this, but perhaps the simplest is to average finite differences computed with a range of different time steps over some interval.
Also, if we take the middle of this interval to be the time of interest, then taking an average of both forward and backward differences becomes equivalent to taking an average of centered differences, which are typically more accurate numerically.
Explicitly, for a fixed time step \(h\) and discrete times \(t_0 - kh \eqdef t_{-k}, \dots, t_0, \dots, t_k \defeq t_0 + kh\), we set
\begin{align*}
    \bv_{t_0}^{h,N,k}(x_i(t_0)) &\defeq \frac{1}{2k} \sum_{\substack{j=-k \\ j\neq0}}^k \bv_t^{jh}(x_i(t_0)) \\
    &= \frac{1}{2k}\sum_{\substack{j=-k \\ j\neq0}}^k\frac{T_{\mu_{t_0}^N}^{\mu_{t_j}^N}(x_i(t_0)) - x_i(t_0)}{jh} \\
    &= \frac{1}{k}\sum_{j=1}^k\frac{T_{\mu_{t_0}^N}^{\mu_{t_j}^N}(x_i(t_0)) - T_{\mu_{t_0}^N}^{\mu_{t_{-j}}^N}(x_i(t_0))}{2jh}.
\end{align*}
This vector field is the one we use in practice in the experiments in Sections \ref{sec:chemotaxis}, \ref{sec:burgers}, and \ref{sec:halfmoon}.

\paragraph{Burn-in Period}
To approximate the particle distribution $\mu_t^N$ over a long time, we need to perform many steps of our Euler-type method.
Of course, this involves re-initializing the micro-scale simulation many times, once after each step.
One potential obstacle of this approach is that the approximate distribution after an Euler step will inevitably be slightly different from the true distribution, and running the micro-scale simulation on the approximate distribution might result in behavior that is initially dominated by the approximate distribution's return to a relative equilibrium.
To account for this and ensure the approximated velocity fields capture mostly the long-term dynamics, we allow the approximate distribution to evolve according to the micro-scale simulation for a prescribed ``burn-in'' time \(H_R\) before starting to take samples to approximate the velocity field at time \(t+H+H_R\).
This ensures that any fringe effects from the Euler step are washed away before the next Euler step.
For a more theoretical discussion of this intuitive motivation for the burn-in period, we defer to \cite{GK03a, GKKZ05,ZVGKK12}.

We also note that the burn-in period also helps to avoid the pathologies that can arise from using Euler's method on, e.g., Wasserstein gradient flows \cite{XL24}.
For example, there may be locations with a high density of approximated particles (or even particles with exactly the same approximated location), which can result in irregularities in the subsequent vector field approximation.
By allowing the particle system to ``burn in'' after each approximating step, we ensure that the particles' density is, loosely speaking, smoothed out.
For an illustration of how the burn-in time allows the density to smooth, see Figures \ref{fig:burgers-approx-2i} and \ref{fig:burgers-approx-2ii}.
Empirical tests indicate that this burn-in is essential for our method to work well.
Understanding these benefits theoretically is a subject of ongoing work.

    %
\subsection{Detailed Description of Our Algorithm}\label{subsec:discrete-algorithm}
 
\begingroup
\renewcommand{\arraystretch}{1.25}
\begin{table}[htbp]
    \centering
    \begin{tabular}{|l|l|}
        \hline
        Parameter & Description \\ 
        \hline
        \( h \) & the time of one step of the micro-scale simulation \\ 
        \( k \) & the number of centered differences to average \\ 
        \( H_S \coloneqq kh \) & \emph{half} the length of the derivative-sample time window \\ 
        \(N_T\) & the number of Euler-type prediction steps \\
        \( H \) & the length of each Euler step \\ 
        \( H_R \) & the burn-in time after each Euler step \\ 
        \( \Delta \coloneqq H_S + H + H_R \) & the time difference between Euler steps \\
        \(S\) & the length of start-up time \\
        \(R\) & the length of final recovery time after all Euler steps \\
        \hline
    \end{tabular}
    \caption{Parameter definitions.}
    \label{tab:parameters}
\end{table}
\endgroup

We now present our complete algorithm for approximating the evolution of a particle system given by a particular micro-scale simulation \(E_h\) (e.g., the micro-scale simulation could be an Euler-Maruyama numerical simulation of an SDE as in \eqref{eqn:discrete-diffusion-simulator-oracle}, a biological system as in Section \ref{sec:chemotaxis}, or any other method for evolving particles for a short time).
For a particular initial distribution of particles \(\mu_0\), our algorithm approximates the distribution \(\mu_T^N\) for large \(T\) with all the additional considerations discussed in this section.
We fix values for the parameters as defined in Table \ref{tab:parameters} and then:
\begin{enumerate}
    \item Run the micro-scale simulation for the start-up time \(S\) (by taking \(S/h\) micro-scale steps) to allow the initial distribution \(\mu_0\) to reach a relative equilibrium (if necessary).
    Call the resulting distribution \(\mu_{(-1)+H_R}\).
    \item For each approximating step \(n = 0, \dots, N_T\):
    \begin{enumerate}
        \item Set \(\mu_{(n)-kh}\defeq \mu_{(n-1)+H_R}\), and take \(2k\) micro-scale steps to get distributions
        \[
            \mu_{(n)-kh}^N, \dots, \mu_{(n)}^N, \dots, \mu_{(n)+kh}^N.
        \]
        \item For each \(x_i\) in the support of \(\mu_{(n)}^N\), compute
        \[
            \bv_{(n)}^{h,N,k}(x_i) \defeq \frac{1}{k}\sum_{j=1}^k\frac{T_{\mu_{(n)}^N}^{\mu_{(n)+jh}^N}(x_i) - T_{\mu_{(n)}^N}^{\mu_{(n)-jh}^N}(x_i)}{2jh}.
        \]
        \item Set
        \[
            \mu_{(n)+H}^N \defeq \left(\id + \bv_{(n)}^{h,N,k}\right)_\# \mu_{(n)}^N.
        \]
        \item Take \(H_R/h\) micro-scale steps to allow the distribution to burn in.
        Call the resulting distribution \(\mu_{(n)+H_R}^N\).
    \end{enumerate}
    \item Run the micro-scale simulation for some final recover time \(R\) (by taking \(R/h\) micro-scale steps) to ensure any fringe errors in the approximations are minimized (e.g., that the distribution has indeed reached a true steady state).
\end{enumerate}

While the notation used above is convenient for highlighting the recursive nature of the algorithm, it becomes unwieldy when discussing how to interpret the approximations.
As such, we set the following notation convention:

\begin{nota}
    Let \(\widetilde\mu_t^N\) denote the approximation produced by the algorithm that corresponds to time \(t\).
\end{nota}

More explicitly, if \(t_n \defeq S+n(H_S + H + H_R)+H_S\), then \(\widetilde\mu_{t_n}^N = \mu_{(n)}^N\) is the approximation we use to compute the Euler step, \(\widetilde\mu_{t_n+H}^N = \mu_{(n)+H}^N\) is the approximation immediately after the Euler step, and \(\widetilde\mu_{t_n+H+H_R}^N = \mu_{(n)+H_R}^N\) is the approximation after the burn-in period.
This notation is convenient because \(\widetilde\mu_t^N\) is an approximation of \(\mu_t^N\) for all \(t\) for which the former is defined.


\begin{rmk}\label{rmk:hidden-variables}
    One limitation of this scheme that will become relevant in Section \ref{sec:chemotaxis} is that it only gives a way to approximate future \emph{locations} of particles and not any underlying data.
    If the micro-scale model has any parameters besides the position of the particles (e.g., the current ``underlying state'' of a particle), then these data must also be predicted across the Euler-type predicting step taken in step 2(c) above.
    In general, there is no single method for doing this, but we will discuss ad hoc approaches as they come up in Section \ref{sec:chemotaxis} where we must also predict the state of the bacteria's flagella and other underlying data (see Sections \ref{subsec:chemotaxis-experimental-results} and \ref{subsec:chemotaxis-compare-reinits}).
\end{rmk}

\section{Simulation Study I: Biological Agent-Based Model}\label{sec:chemotaxis}

We now demonstrate the efficacy of our algorithm on a few illustrative example micro-scale models.
The first such model we consider is one of bacterial chemotaxis.


\subsection{Micro-scale Model Description}

The specific model we use is described in detail in \cite{SGOK05,LPSK23}.
For brevity, we omit the precise details of the (see \cite{SGOK05,LPSK23} for more details), but we provide a high-level overview here to illustrate its computational complexity (and hence the desire for an algorithm to predict its evolution).
Each cell's mobility is affected by a chemoattractant profile (specifically, its spatial derivative) which influences the ``rotations'' of the cell's six ``flagella.''
The model computes the probability that each flagellum changes its rotational direction based on changes of the concentration of the protein CheY-P.
The concentration of CheY-P has a time-derivative which is a function variables which evolve according to a joint ODE that itself is a function of the chemoattractant profile.
The rotational directions of the flagella determine the motion of the cell at each step --- if fewer than three are rotating counter-clockwise, the cell ``tumbles,'' meaning it does not move at that time step.
Otherwise, the cell ``swims,'' meaning it moves with constant speed in the direction that it moved in the last time step (or chooses a new random direction if it was tumbling in the last time step).
Then to actually execute one time step, the model updates the locations of the cells that swim for that step, and then it updates all underlying variables according to the prescribed derivatives and the value of the chemoattractant profile at the new location. 

Due to its intricate nature, many more operations are needed to execute one step of this model than, e.g., a standard SDE Euler-Maruyama model.
Thus, our approximation algorithm provides an appreciable computational improvement by allowing us to skip many of these expensive micro-scale steps.

\begingroup
\renewcommand{\arraystretch}{1.25}
\begin{table}[tbp]
\begin{center}
\begin{tabular}{|l|l|}
    \hline
    Parameter & Description \\
    \hline
    $N = 10{,}000$ & the number of bacteria (the number of simulations) \\
    $h = 1/16$ & micro-scale time step \\
    $v_{\text{cell}} = 0.003$ & swimming speed of each bacteria \\
    $S(x) = \cN(\mu=6.5, \sigma=1.35)$ & chemoattractant profile \\
    \hspace{1em} $=(1.35\sqrt{2\pi})^{-1}\exp\left(-\frac{(x-6.5)^2}{2(1.35)^2}\right)$ & \\
    \hline
\end{tabular}
\caption{Parameters for chemotaxis micro-scale simulation.}
\label{tab:chemotaxis-paramaters}
\end{center}
\end{table}
\endgroup

\subsection{Experimental Results}\label{subsec:chemotaxis-experimental-results}

\begin{figure}[tbp]
    \centering
    \includegraphics[width=0.9\textwidth]{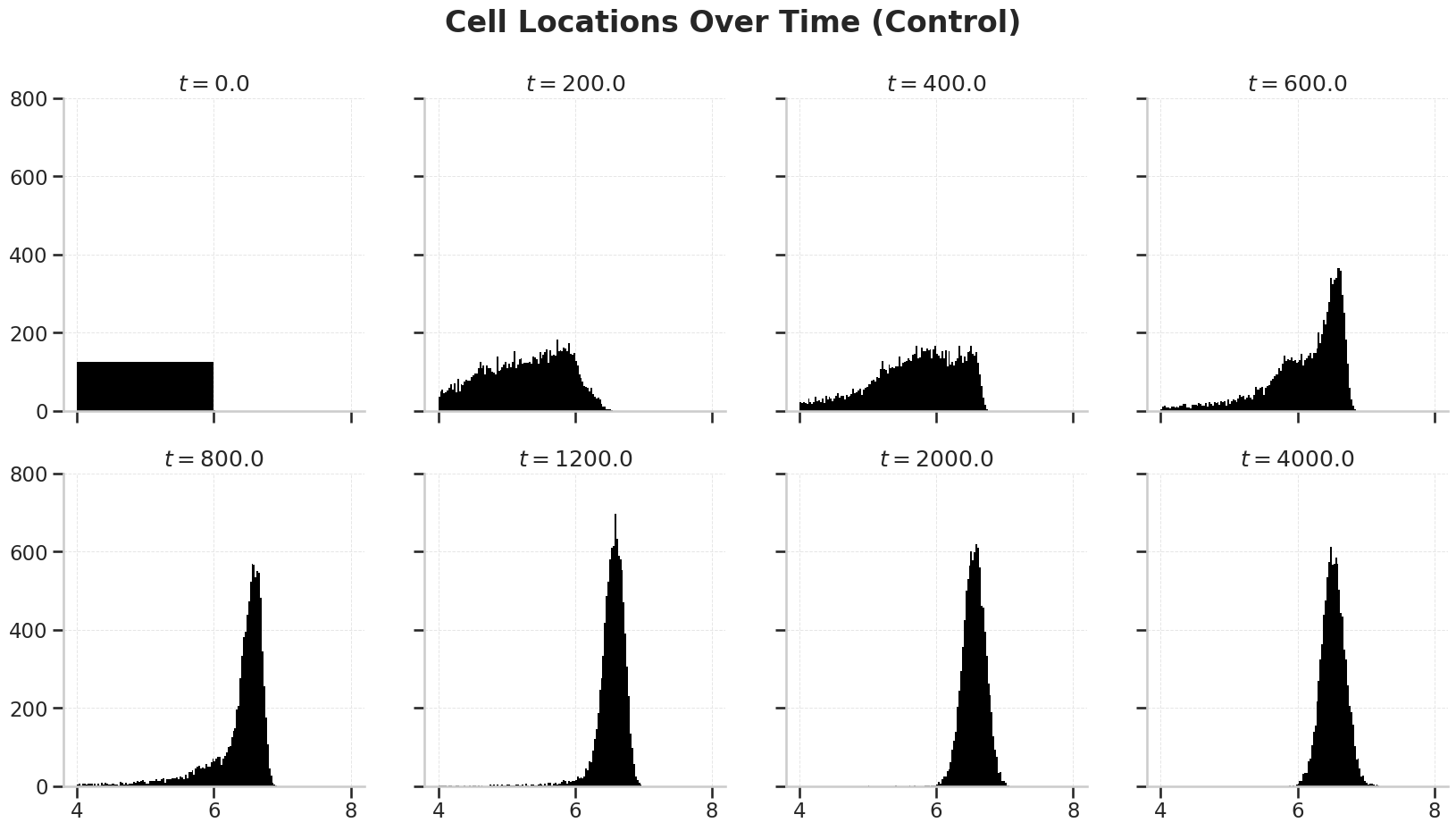}
    \caption{Histograms of bacteria locations at various times \(t\) from \(t=0\) to \(t=4000\) for the control simulation of chemotaxis.}
    \label{fig:chemotaxis-0mastercontrol-1d-1}
\end{figure}

For reference, we run the simulation without prediction to establish a ``control" as the ground truth behavior of the system.
We initialize the cell locations \(x_i(0)\) to be evenly spaced in the interval \([4,6]\), and we use the same parameters as in \cite{SGOK05, LPSK23} with only the changes specified in Table \ref{tab:chemotaxis-paramaters}.
Histograms of bacteria positions at different times in the control simulation are shown in Figure \ref{fig:chemotaxis-0mastercontrol-1d-1}.
Then, to compare with the control data, we perform a simulation using the approximation algorithm with the additional parameters in Table \ref{tab:chemotaxis-addparams}.
Note that we choose these parameters (in particular, the number of centered differences \(k\) --- which effectively gives the length of the ``burst'' period discussed in Remark \ref{rmk:burst-period-vs-h} --- and the length of the Euler-type prediction step \(H\)) according to the heuristics discussed in Section \ref{subsec:discrete-choosing-step-size}.
For simplicity, we choose to fix these parameters at the beginning based on previously-observed data about the micro-scale model, but one could design algorithms that make these choices adaptively based on, e.g., how quickly the computed velocity field is changing --- the criterion discussed at the end of Section \ref{subsec:discrete-choosing-step-size}.
\begingroup
\renewcommand{\arraystretch}{1.25}
\begin{table}[tp]
\begin{center}
\begin{tabular}{|l|l|}
    \hline
    Parameter & Description \\
    \hline
    \(S = 200\) & length of start-up time \\
    \(k = 80\) & number of centered differences to average \\
    \(H = 95\) & length of each Euler-type prediction step \\
    \(H_R = 100\) & burn-in time after each Euler step \\
    \(N_T = 19\) & number of Euler steps \\
    \(R = 0\) & length of final recovery time after last Euler step \\
    \hline
\end{tabular}
\caption{Additional parameters for approximation algorithm with chemotaxis micro-scale simulation.}
\label{tab:chemotaxis-addparams}
\end{center}
\end{table}
\endgroup

To continue simulating after each Euler step, the micro-scale simulator requires not only current bacteria locations but also all microscopic quantities. However, as discussed in Remark \ref{rmk:hidden-variables}, the algorithm described in Section \ref{subsec:discrete-algorithm} predicts only the \emph{locations} of bacteria after each Euler step, resulting in a lack (or missing) of microscopic quantities. Hence, after the predicting time step, the suggested algorithm requires the re-initialization of microscopic quantities, such as $u_1, u_2$, and $s^{(i)}$. Even though some (quantitatively bad but) qualitatively good re-initializations quickly reach slow dynamics, we suggest a better re-initialization scheme based on the history of microscopic quantities and demonstrate a more accurate prediction of bacteria density distribution over multiple Euler~steps.

After each Euler step, we re-initialize the underlying data according to the history-dependent scheme in Table \ref{tab:scheme-comparison}. For example, given data at time \(t_0\), we use the optimal transport approximation scheme to predict the bacteria locations at \(t_0+H\) and re-initialize the microscopic properties at \(t_0+H\) as follows:
\begin{itemize}
    \item Set \(u_1=0\) for all bacteria,
    \item compute \(u_2'(t_0) = \frac{du_2}{dt}\big|_{t=t_0}\) from the prescribed ODE in \cite{SGOK05} to take a forward Euler step
    \(\widetilde u_2(t_0+H) \defeq u_2(t_0) + H\cdot u_2'(t_0)\), and
    \item keep the flagella rotating direction and bacteria moving direction exactly the same as they were at time \(t_0\).
\end{itemize}

\begin{figure}[tbp]
    \centering
    \includegraphics[width=0.9\textwidth]{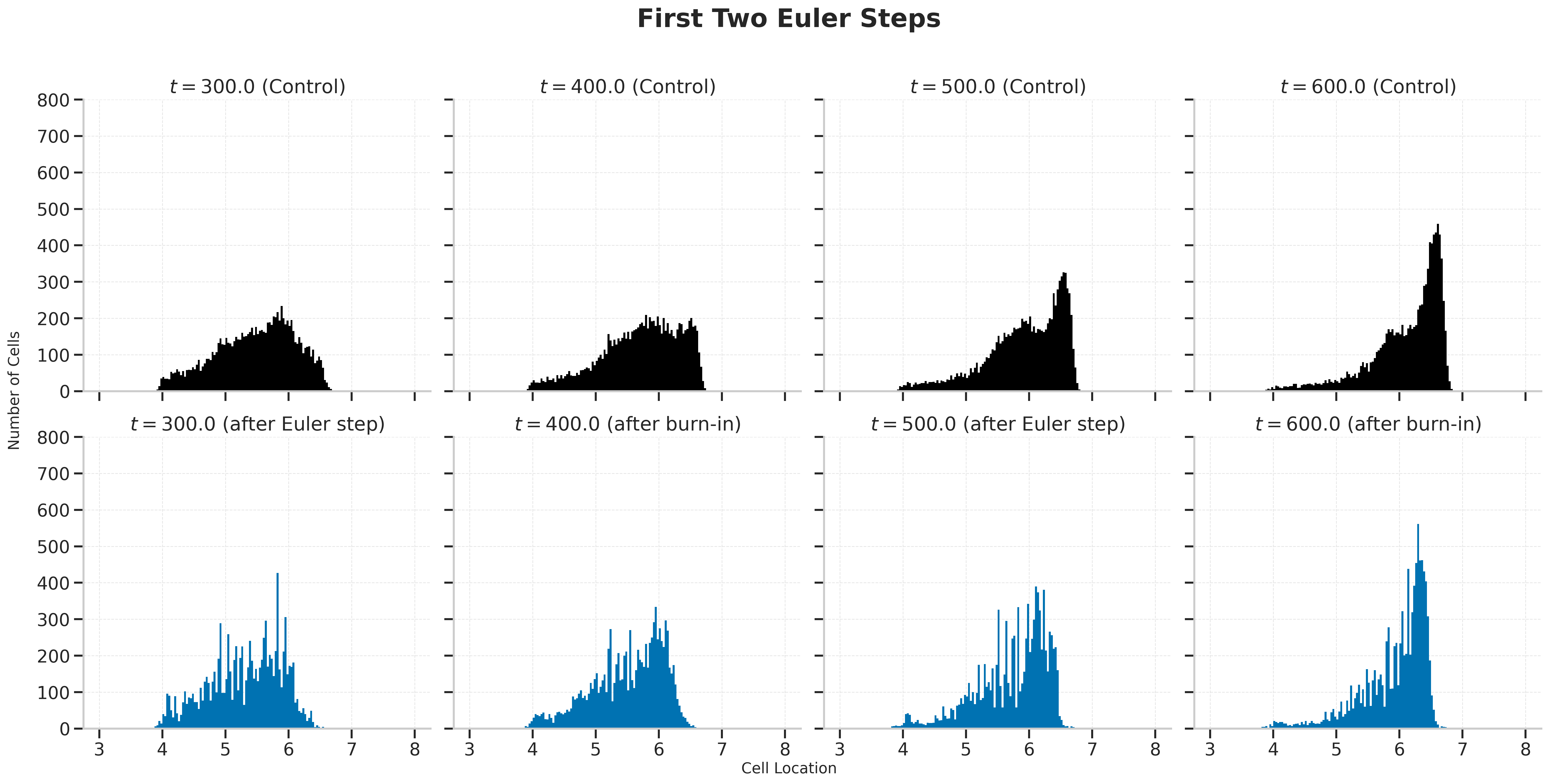}
    \caption{
    Histograms of bacteria locations after first two Euler steps (bottom) compared with corresponding times from control simulation (top).
    We run the micro-scale simulation until \(t=200\) (not pictured), take an Euler step to \(t=300\) (leftmost), burn in until \(t=400\) (second from left), take an Euler step to \(t=500\) (second from right), and burn in until \(t=600\) (rightmost).
    }
    \label{fig:chemotaxis-1approx1-1d-2i}
\end{figure}
\begin{figure}[tbp]
    \centering
    \includegraphics[width=0.9\textwidth]{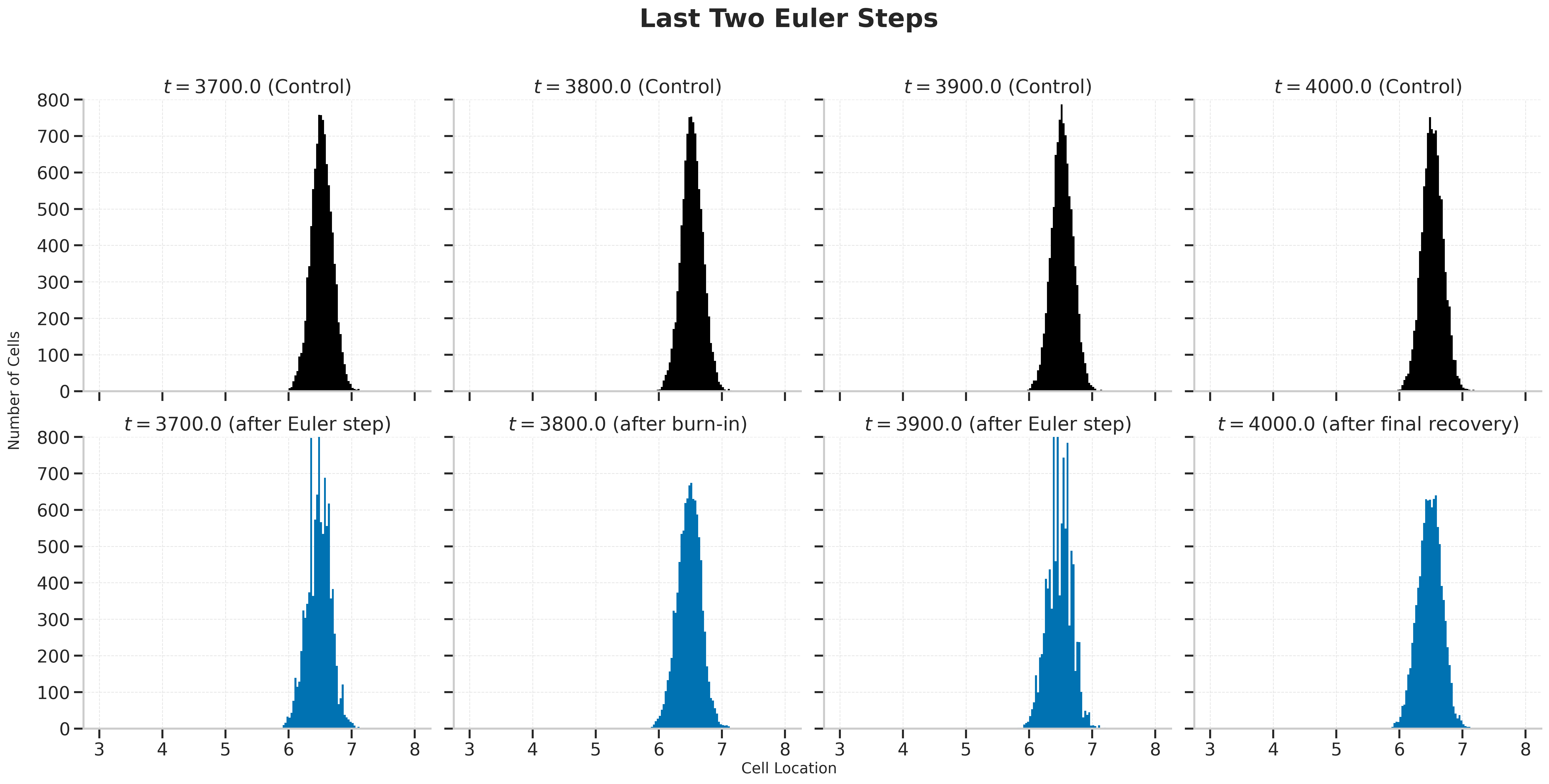}
    \caption{Histograms of bacteria locations after final two Euler steps (bottom) compared with corresponding times from control simulation (top). We take an Euler step from \(t=3600\) (not pictured) to \(t=3700\) (leftmost), burn in until \(t=3800\) (second from left), take an Euler step to \(t=3900\) (second from right), and finally burn in until \(t=4000\) (right-most), the end time.}
    \label{fig:chemotaxis-1approx1-1d-2ii}
\end{figure}

To qualitatively inspect the accuracy of the approximations, we show histograms of the control and approximated distributions of the bacteria locations after the first two and last two Euler steps in Figures \ref{fig:chemotaxis-1approx1-1d-2i} and \ref{fig:chemotaxis-1approx1-1d-2ii}, respectively.
We see that the approximations match reasonably closely with the control distributions at the corresponding times.
In particular, we see that our algorithm approximates the correct steady-state distribution, as evidenced by Figure \ref{fig:chemotaxis-1approx1-1d-2ii} --- the last two Euler steps do not change the approximated distribution, meaning the velocity field is approximately zero, which is what it should be for a distribution in a steady state.

To quantitatively assess the accuracy of our approximations, we compute the Wasserstein distance between the control distribution \(\mu_t^N\) and the approximated distribution \(\widetilde\mu_t^N\).
A plot of the distance \(W_2(\mu_t^N,\widetilde\mu_t^N)\) versus time $t$ (for the discrete set of times from our experiments) is shown in Figure \ref{fig:chemotaxis-1compare-schemes-1}.
We can also visualize how well the approximations track the true curve \(t\mapsto \mu_t^N\) by embedding into a common linear space and performing a dimension reduction.
Specifically, we (1) fix a common (discrete) reference measure \(\sigma\) (in our case, by choosing \(N = 10{,}000\) independent samples of a standard Gaussian); (2) compute optimal transport maps \(T_\sigma^{\mu_t^N}\) and \(T_\sigma^{\widetilde\mu_t^N}\); (3) vectorize \(T_\sigma^{\mu_t^N}\) and \(T_\sigma^{\widetilde\mu_t^N}\) by interpreting them as \(N\times d\) matrices, where \(N\) is the number of particles in \(\sigma\) and \(d\) is the ambient dimension (two in our case); and (4) perform PCA to project onto a two-dimensional subspace.
The results are shown in Figure \ref{fig:chemotaxis-1compare-schemes-0}.

\begin{rmk}\label{rmk:chemotaxis-step-size-limit}
    We note that the micro-scale model used in this example actually breaks down for \(h\) much larger than our choice of \(1/16\).
    For example, with the rest of the model parameters fixed, the computation of the underlying variables as described in \cite{SGOK05} fails badly for \(h = 1/4\).
    In particular, there is an assumption in \cite{SGOK05} that \(h\cdot k_{\pm}\ll 1\), and by choosing \(h = 1/4\), the values of \(k_{\pm}\) over time become many orders of magnitude larger than 1, which means this assumption is grossly false.
    Through trial and error, we determine that \(h = 1/16\) is about the largest time step that preserves the above inequality for the entire simulation.
    This means that we really cannot improve the efficiency of the micro-scale model by making the time step larger --- the time step must be kept sufficiently small to preserve the model of the micro-scale dynamics, but we still need to simulate the model for a long time to observe the steady-state behavior, which is exactly the motivating archetype of the paper.
    For more examples of models with a similar property, and a more detailed discussion of the resulting tradeoff between accuracy, stability, and computational cost, we defer the reader to \cite{GK03a,RGK03}.
\end{rmk}

\subsection{Computational Improvement}\label{subsec:chemotaxis-computational-improvement}

As we briefly discuss in the introduction, we are not interested in the precise implementation of the micro-scale model, and instead we are interested in demonstrating that our approach does indeed reduce the number of micro-scale steps needed to accurately take a ``macro'' time step.
Because of this, we illustrate the computational improvement of our method by computing its reduction in required micro-scale steps.
With the parameters defined above, our algorithm requires 11/20 as many micro-scale steps as the equivalent control simulation (10 seconds of velocity-field-approximation and 100 seconds of burn-in time for every 200 seconds of simulated time).
Importantly, the length of each micro-scale burst to approximate the velocity field is 1/10 as long as the corresponding Euler-step, and so ignoring burn-in, our ``macro-scale timestepper'' only requires 1/10 as many micro-scale steps as running the simulation to the equivalent time.
Thus, the primary way the overall improvement could be increased is by decreasing the burn-in time.
We have experimentally observed that with our choice of re-initialization, the burn-in time required for the underlying variables to return to relative equilibrium scales roughly with the size of the Euler step.
This means that achieving a better ratio of required micro-scale steps likely requires a better approach to re-initializing the underlying variables.
We have seen in further experiments (omitted for brevity) that the burn-in time can be reduced considerably if the underlying variables are re-initialized perfectly, e.g., by using data from the control simulation.
This suggests that the current barrier to more significant improvements is the quality of re-initializations, not the quality of the distribution approximations.
We demonstrate the importance of the re-initialization scheme in Section \ref{subsec:chemotaxis-compare-reinits}.

\subsection{Comparison with Alternative Methods} \label{subsec:chemotaxis-alternative-methods}

\paragraph {Particle-wise approximation}
To demonstrate the value of using optimal transport in our algorithm, we compare our method with a similar method that uses a different approach to learning the underlying dynamics.
Our method uses a short burst of micro-scale steps to gather data that is used to approximate a velocity field, so a natural question is whether the same data could be used more effectively; i.e., is the apparent success of our method due to optimal transport, or is it simply the case that there is enough data in the short burst that any reasonable approach would be just as successful?

One extremely naive approach is to attempt to approximate the velocity field without using optimal transport.
Specifically, we can perform the finite-difference calculations on the individual cells:
\[
    x_i^{[1],h,k}(t) \coloneqq \frac{1}{k}\sum_{j=1}^k\frac{x_i(t+jh) - x_i(t-jh)}{2jh}, \qquad i=1,\ldots,N.
\]
Effectively, this attempts to approximate \(x_i'(t)\) for each individual cell, which we could then use in an Euler-type approximation
\[
    \widetilde{x}_i(t+H) \coloneqq x_i(t) +  Hx_i^{[1],h,k}(t), \qquad i=1,\ldots,N.
\]
As explained in Section \ref{subsec:discrete-choosing-step-size}, this approach should result in a much worse approximation of the distribution's overall evolution.
Our experiments agree with this prediction.
We run exactly the same approximation algorithm with the same parameters as above, with the only change being that we use the particle-wise finite differences as the approximated velocity field rather than the OT finite differences. 
In Figure \ref{fig:chemotaxis-1compare-schemes-1}, we see that the \(W_2\) distance between the control and approximated distributions is substantially higher, and in Figure \ref{fig:chemotaxis-1compare-schemes-0}, we see that the approximations fail to closely track the control distribution's trajectory.
This is a naive approach, but it validates the intuition that optimal transport really is doing something nontrivial in the algorithm.

\paragraph{JKOnet* approximation}
A much more reasonable alternative to our method is to attempt to learn an SDE that approximates the observed behavior.
That is, instead of using the short burst of micro-scale steps to approximate a velocity field, we could use that data to learn an SDE whose behavior closely mimics the observed cell behavior from the micro-scale model.
With this SDE in hand, we could hope to approximate the cells' distribution a long time in the future by running the learned SDE for the equivalent time.
Since simulating SDEs is computationally cheap, this would also give significant computational improvements by replacing many expensive micro-scale steps with cheap SDE steps.

To evaluate whether this approach is viable, we perform a similar experiment (again with the same parameters as above, including start-up, burn-in, and final recovery), except we use the data from the \(2k+1\) micro-scale steps in the micro-scale burst to learn an SDE rather than approximate the velocity field.
To learn the SDE, we use JKOnet* \cite{TLGD24}, which is a particularly successful method for learning SDEs given particle data at successive times.
We use this data to learn the potential, interaction, and internal energies of an approximating underlying SDE, and use those learned parameters to simulate an SDE with initial particle positions equal to the locations of the cells in the last cell distribution from the micro-scale burst.
We simulate this SDE for the same length of time as our ``Euler-type prediction step'' (\(H = 95\) in the case of the experiment above), and reinitialize our micro-scale chemotaxis model with cells at the locations of the particles after the SDE simulation.
We use the same re-initialization scheme for all the underlying data, meaning the only difference between our method and the new method using JKOnet* is how we predict the cell locations at time \(t+H\) in each Euler-type step.

\begin{figure}[tp]
    \begin{subfigure}[t]{0.49\textwidth}
        \center{\includegraphics[width=\textwidth,height=2in]{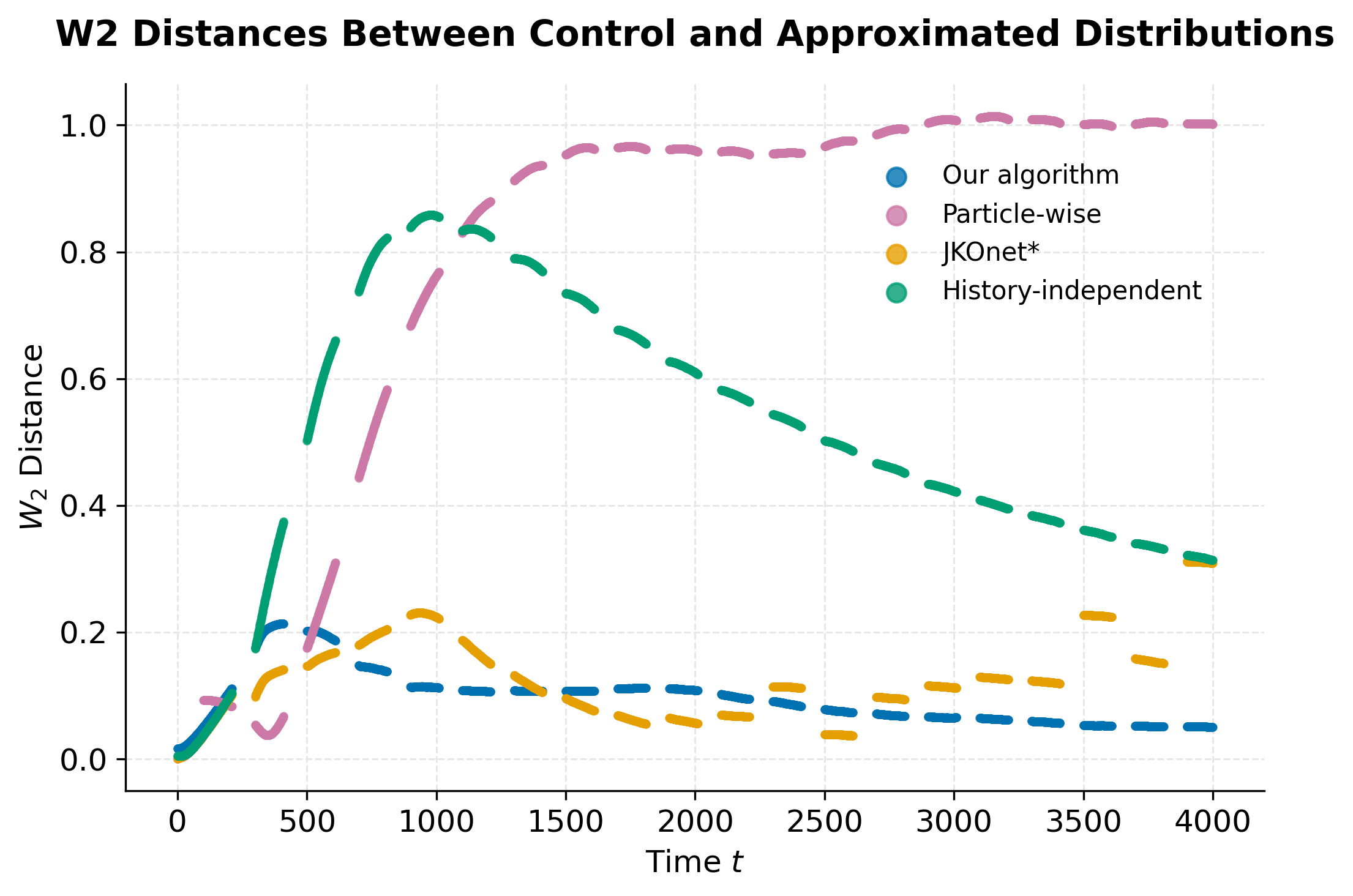}}
        \caption{2-Wasserstein distance between control distribution and approximated distributions versus time.}
        \label{fig:chemotaxis-1compare-schemes-1}
    \end{subfigure}
    \hfill
    \begin{subfigure}[t]{0.49\textwidth}
        \center{\includegraphics[width=\textwidth,height=2in]{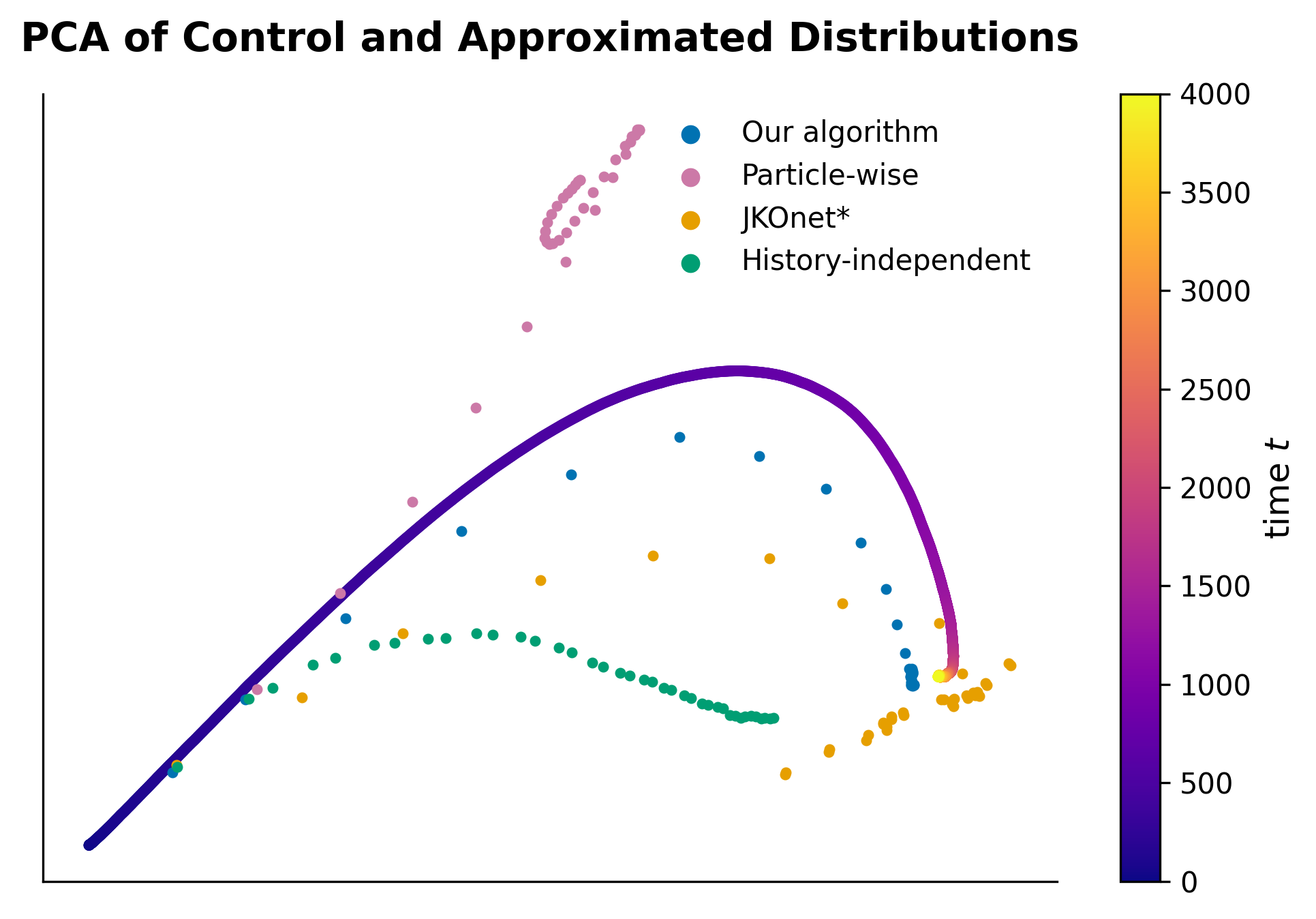}}
        \caption{Projections of distributions onto the two principal components of the control distribution's evolution over time.
        The solid rainbow curve shows the control distribution, where the color represents time according to the color bar at the right.
        }
        \label{fig:chemotaxis-1compare-schemes-0}
    \end{subfigure}
    \caption{Comparisons between control distributions and approximations.
    The four approximation schemes are as follows:
    the blue is our algorithm, as described in Section \ref{subsec:chemotaxis-experimental-results};
    the purple is using the particle-wise finite differences, as described in the first half of Section \ref{subsec:chemotaxis-alternative-methods};
    the green is using the JKOnet* method for learning and simulating an SDE, as described in the second half of Section \ref{subsec:chemotaxis-alternative-methods};
    the orange is using our algorithm with the history-independent re-initialization scheme, as described in Section \ref{subsec:chemotaxis-compare-reinits}.
    }
    \label{fig:chemotaxis-compare-approximations}
\end{figure}

In Figure \ref{fig:chemotaxis-1compare-schemes-1}, we produce a comparison of the \(W_2\) distance between the control and approximated distributions over time for the two different methods, and in Figure \ref{fig:chemotaxis-1compare-schemes-0}, we perform PCA to visualize how well the approximations track the trajectory of the control evolution.
We see that the JKOnet* approach performs substantially worse than our method.
This is because learning an SDE typically requires simulation data from many more times (and in particular, from a wide range of times during the evolution).
In our case, we only have access to simulation data from the short micro-scale burst of \(2k+1\) steps, which as a time interval is a small fraction of the total length of time we wish to simulate.
It is this restriction to only \emph{local} data that makes the SDE-learning approach so prone to errors in this context.
This experiment shows how our method provides a tradeoff between the need for data and the generality of the object being learned --- less data is required if one is only interested in understanding how a particular distribution evolves according to the micro-scale model rather than if one hopes to learn a model that captures the global dynamics of the model.
If we wanted an SDE that could quickly simulate the evolution of \emph{any} initial distribution, then we could run many more micro-scale steps and then use JKOnet* to learn an appropriate SDE.
However, our goal is not to learn a model for how the model \emph{would} act on \emph{any} distribution --- we are only interested in developing a method for taking a \emph{particular} initial distribution and predicting its evolution using as few micro-scale steps as possible.

\subsection{Comparison with Different Re-initializations}\label{subsec:chemotaxis-compare-reinits}

In section \ref{subsec:chemotaxis-computational-improvement}, we briefly allude to the importance of the method by which we re-initialize the underlying variables across each Euler-type step.
To demonstrate the efficacy of our suggested re-initialization described in Section \ref{subsec:chemotaxis-experimental-results}, we also perform a history-independent re-initialization as described in \cite{SGOK05}.
Table \ref{tab:scheme-comparison} shows the different parameters of re-initialization between the two schemes.
In Figure \ref{fig:chemotaxis-1compare-schemes-1}, we provide a comparison of the \(W_2\) distance between the control and approximated distributions over time for the two different approximation schemes, and in Figure \ref{fig:chemotaxis-1compare-schemes-0}, we perform PCA to visualize how well the approximations track the trajectory of the control evolution.

\begingroup
\renewcommand{\arraystretch}{1.5}
\begin{table}[ht]
    \centering
    \begin{tabular}{|c|l|l|}
        \hline
        \multicolumn{1}{|c|}{Parameters} & \multicolumn{1}{|c|}{History-Dependent Scheme} & \multicolumn{1}{|c|}{History-Independent Scheme} \\
        \hline 
        \(u_1\) & \(\widetilde u_1(t_0+H) \defeq 0\) & \(\widetilde u_1(t_0+H) \defeq 0\) \\
        \(u_2\) & \(\widetilde u_2(t_0+H) \defeq u_2(t_0) + H\cdot u_2'(t_0)\) &
        \(\widetilde u_2(t_0+H) \defeq f(S(\tilde x(t_0+H)))\) \\
        \(s^{(i)}\) & \(\tilde s^{(i)}(t_0+H) \defeq s^{(i)}(t_0)\) & \(\tilde s^{(i)}(t_0+H) \defeq 0\) (fixed) \\
        \hline
    \end{tabular}
    \caption{Parameters of different re-initialization schemes. The ``history-dependent'' scheme (as suggested in this paper) retains the flagella states from the micro-scale simulation immediately before the Euler step, and it attempts to predict \(u_2\) via an Euler forward step. The ``history-independent'' scheme (described in \cite{SGOK05}) sets a fixed number of flagella to rotate counterclockwise (here, 0), and \(u_2\) is set solely as a function of the predicted location of bacteria.}
    \label{tab:scheme-comparison}
\end{table}
\endgroup
    
The first Euler step shows similar results between the two experiments, but the history-independent scheme becomes (qualitatively good but) ``relatively" inaccurate by the end of the first burn-in period. This is because the history-independent method re-initializes the microscopic properties with a ``fixed'' value regardless of current conditions, resulting in a large burn-in time to reach slow dynamics, while the history-dependent scheme uses the current microscopic properties to re-initialize after the next Euler step. In other words, the underlying variables (particularly \(u_2\) and the flagella rotating direction) could not respond quickly enough to the fixed re-initialization. Furthermore, even though the distributions look the same after the first Euler step, the two methods still yield different subsequent microscopic behavior, leading to slightly different slow dynamics.

The difference in the two methods' approximation accuracy suggests that the underlying microscopic properties are important components in the micro-scale simulation. Hence, different settings of the properties result in different overall distribution behavior. In the language of Section \ref{sec:otvfs}, the tangent field that describes the distribution's evolution is somehow a function of these variables, not merely a function of position, so the method we choose to predict them across Euler steps is critical to our capability to approximate the evolution using optimal transport maps.
Like all mathematical models, choices about which prediction methods should be used depend entirely on the needs of the particular application. Unfortunately, these choices are mostly ad hoc and context-specific, so there is little to say in general about how they impact the accuracy of the overall approximation algorithm.

\section{Simulation Study II: Partial Differential Equation} \label{sec:burgers}

To further demonstrate the ubiquity of our approach, we now perform our algorithm on a particle model which simulates the well known Burgers' equation
\[
    \del_t\mu = \mu\del_x\mu+\nu\del_{xx}\mu.
\]
The micro-scale model we use is described in detail in \cite{AK21}, but we provide a brief description here.
At each discrete time step of size \(h\) (i.e., \(t_{j+1} = t_j + h\)), we update the location of the \(i\)th particle according to
\[
    x_i(t_{j+1}) = x_i(t_j) + \frac{mh}{Zd_{i,m}} + \sqrt{2\nu h}W_i
\]
where \(Z\) and \(\nu\) are parameters of the motion, \(d_{i,m}\) represents the distance between the \(m\)th particle to the left and right of our \(i\)th particle, and \(W_i\sim\cN(0,1)\) are i.i.d standard normal random variables.
We also enforce periodic boundary conditions on \([0,2\pi)\); i.e., the particles are understood to move on the circle, and the computations of \(d_{i,m}\) take this into account.
For more details about the model and its link to Burgers' equation, we refer the reader to \cite{AK21,LKGK03}, but we note that do not ever use the Burgers' equation PDE in computing any of our approximations --- we only present it to demonstrate that our method works on particle systems that model PDEs.

\begingroup
\renewcommand{\arraystretch}{1.25}
\begin{table}[tbp]
\begin{center}
\begin{tabular}{|l|l|}
    \hline
    Parameter & Description \\
    \hline
    \(N = 20{,}000\) & number of independent particles \\
    \(h = 1/4096\) & micro-scale simulation step size \\
    \(x_i(0) \sim \cN(\pi,\tfrac12)\) i.i.d.\ \(\quad \forall i=1,\ldots,N\) & initial particle locations \\
    \(T = 2\) & end time \\
    \(Z = 1000\) & particle coupling strength \\
    \(\nu = 2\) & kinematic viscosity \\
    \(m = 200\) & number of particles used in \(d_{i,m}\) \\
    \hline
\end{tabular}
\caption{Parameters for Burgers' model micro-scale simulation.}
\label{tab:burgers-parameters-sim}
\end{center}
\end{table}
\endgroup

\begingroup
\renewcommand{\arraystretch}{1.25}
\begin{table}[ht]
\begin{center}
\begin{tabular}{|l|l|}
    \hline
    Parameter & Description \\
    \hline
    \(S = 1/8\) & length of start-up time \\
    \(k = 48\) & number of centered differences to average \\
    \(H = 37/256\) & length of each Euler-type prediction step \\
    \(H_R = 3/32\) & burn-in time after each Euler step \\
    \(N_T = 7\) & number of Euler steps \\
    \(R = 1/8\) & length of final recovery time after last Euler step \\
    \hline
\end{tabular}
\caption{Additional parameters for approximation algorithm with Burgers' model micro-scale simulation.}
\label{tab:burgers-parameters-alg}
\end{center}
\end{table}
\endgroup

\begin{figure}[tbp]
    \centering
    \includegraphics[width=0.9\textwidth]{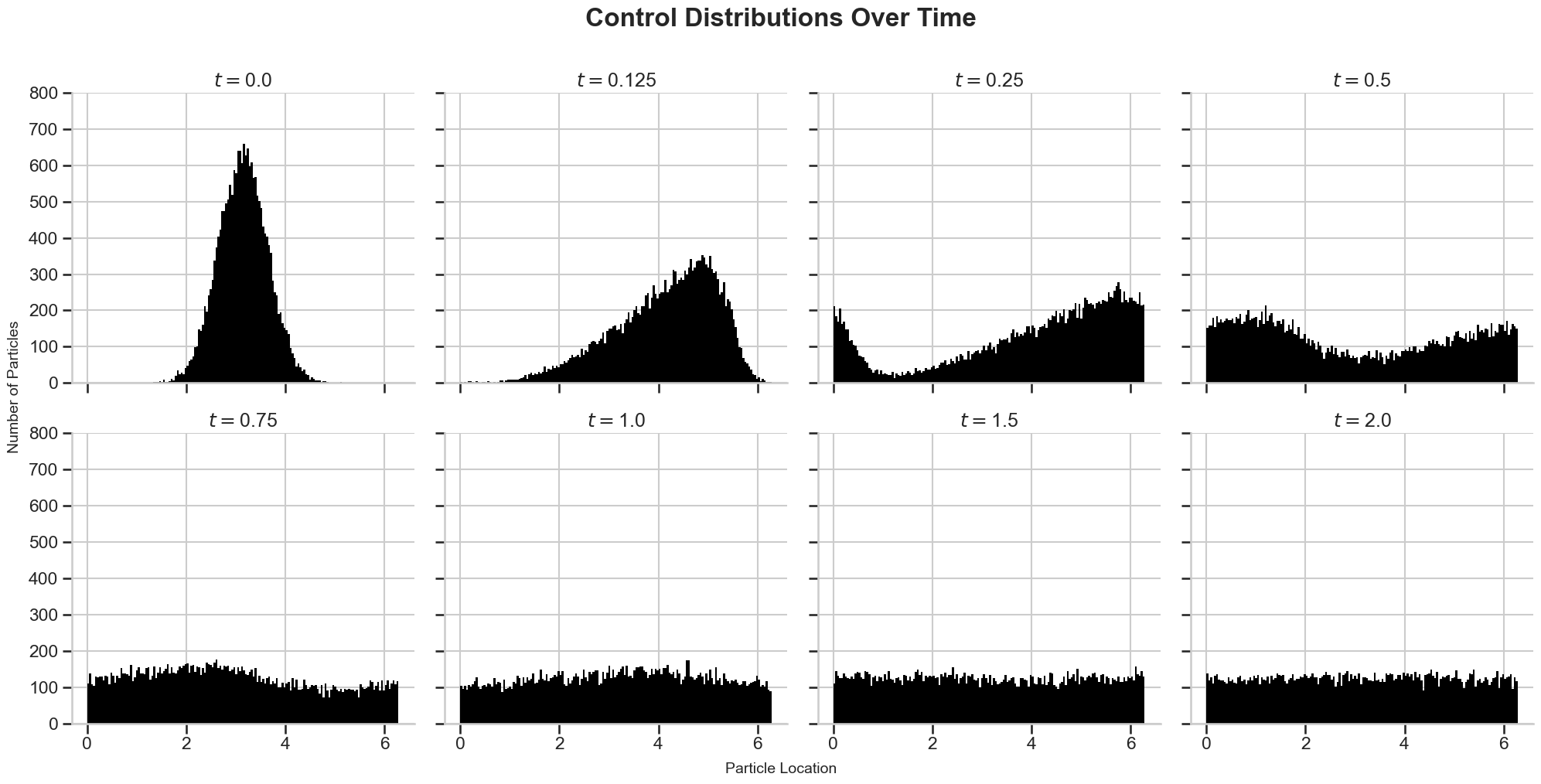}
    \caption{Histograms of particle locations at various times \(t\) from \(t=0\) to \(t=2\) for the control simulation of the Burgers' model.}
    \label{fig:burgers-control}
\end{figure}

As before, we first run the micro-scale model from \(t=0\) to \(t=2\) with the parameters in \ref{tab:burgers-parameters-sim}.
The initial particle locations \(x_i(0)\) are chosen randomly by independently sampling from the Gaussian distribution \(\cN(\pi,\tfrac12)\).
Histograms of particle locations at different times in the control simulation are shown in Figure \ref{fig:burgers-control}.

\begin{figure}[tbp]
    \centering
    \includegraphics[width=0.9\textwidth]{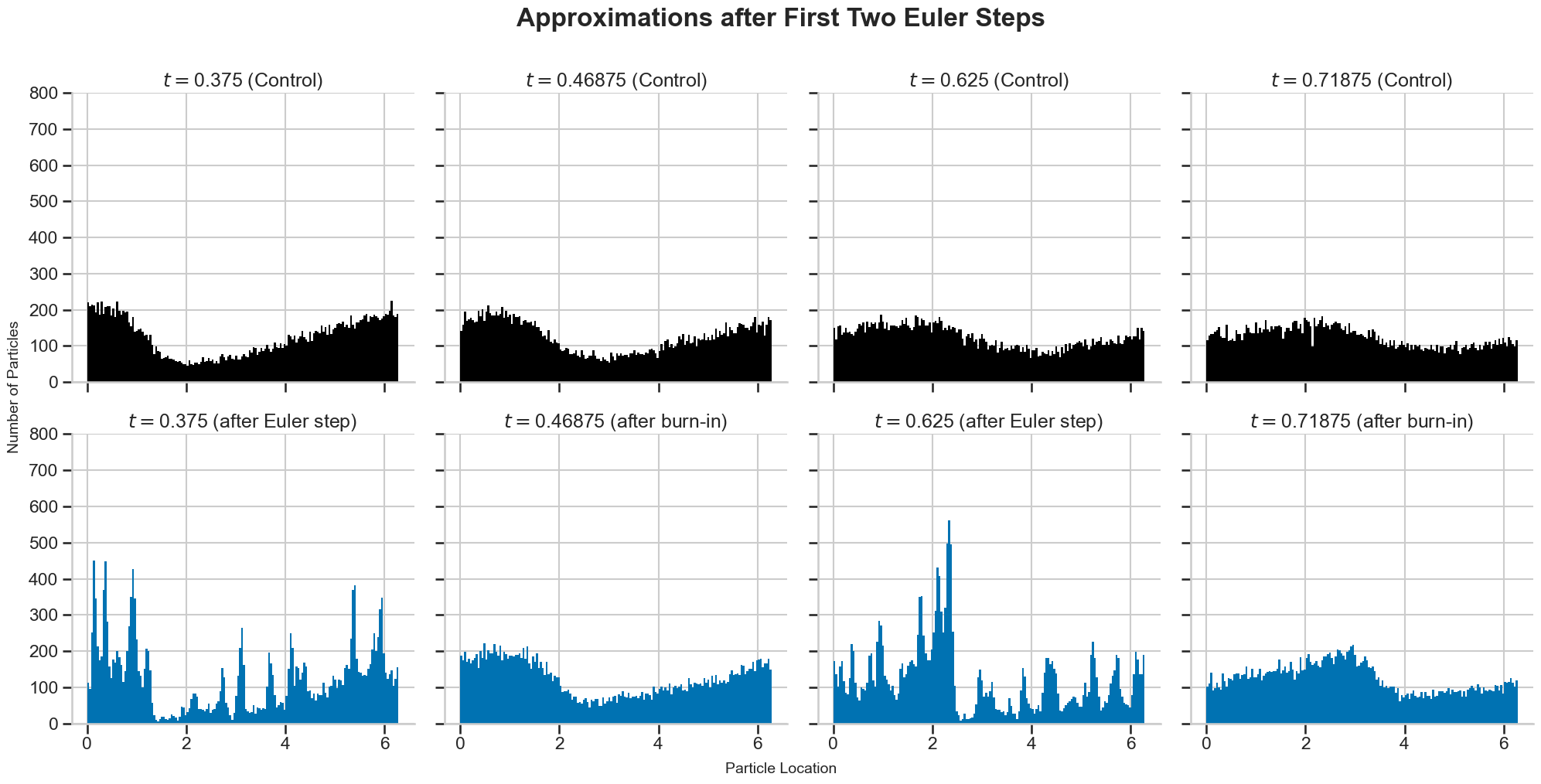}
    \caption{
    Histograms of particle locations after first two Euler steps (bottom) compared with corresponding times from control simulation (top).
    We run the micro-scale simulation until \(t=11/32\) (not pictured), take an Euler step to \(t=3/8\) (leftmost), burn in until \(t=15/32\) (second from left), take an Euler step to \(t=5/8\) (second from right), and burn in until \(t=23/32\) (rightmost).
    }
    \label{fig:burgers-approx-2i}
\end{figure}
\begin{figure}[tbp]
    \centering
    \includegraphics[width=0.9\textwidth]{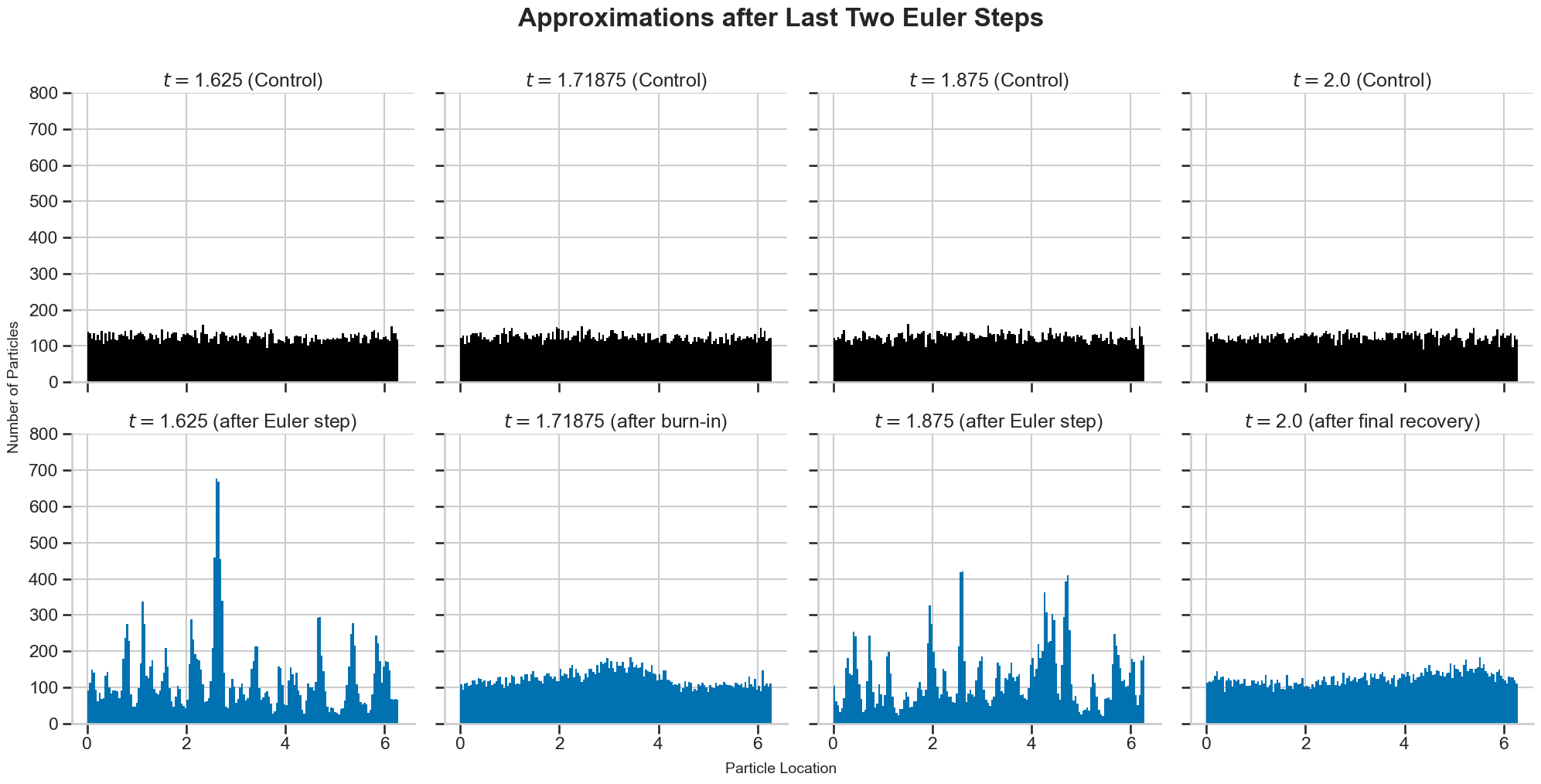}
    \caption{Histograms of particle locations after final two Euler steps (bottom) compared with corresponding times from control simulation (top). We take an Euler step from \(t=47/32\) (not pictured) to \(t=13/8\) (leftmost), burn in until \(t=55/32\) (second from left), take an Euler step to \(t=15/8\) (second from right), and finally burn in until \(t=2\) (right-most), the end time.}
    \label{fig:burgers-approx-2ii}
\end{figure}

We then run the approximation algorithm with the same parameters as in Table \ref{tab:burgers-parameters-sim} and the additional parameters (as defined in Section \ref{subsec:discrete-algorithm} in Table \ref{tab:burgers-parameters-alg}.
Note that the effective end time of this simulation is \(T = S+N_T(H_S + H + H_R) + R = 2\) (since \(H_S + H + H_R = 1/4\)), which agrees with the control simulation.
Figures \ref{fig:burgers-approx-2i} and \ref{fig:burgers-approx-2ii} show the distributions obtained after the first two and last two Euler steps (and corresponding burn-in periods) respectively.
We observe that, after the burn-in period, the approximated distributions do indeed reasonably resemble the control distributions at the corresponding times.
We also notice that the distributions immediately after the Euler step are much ``spikier'' than the control distributions.
This highlights the importance of the burn-in period --- the spiky distribution quickly returns to a relatively smooth distribution that much more closely resembles the control distribution, at which point the velocity field approximation will be much closer to the velocity field describing the control distribution's evolution.
If the velocity field were approximated immediately after the Euler step, the effect of the distribution ``smoothing out'' would dominate the effect of the long-term evolution, which is the piece we wish to approximate.

\begin{rmk}
    It is worth pointing out that the spikiness observed in the approximated distributions after the Euler step is a result of the approximated velocity field \(\bv_t^{k,N,h}\) not being ``smooth enough,'' as other experimental evidence suggests that smoother velocity fields yield smoother approximated distributions.
    Thus, this spikiness could likely be reduced by, e.g., first applying a kernel smoothing technique to this velocity field, and doing so could drastically reduce the burn-in time required for the distribution to return to a relative equilibrium.
\end{rmk}

To compare the computational cost of our method compared to the control simulation, we once again compute the number of micro-scale steps needed.
Each step of our algorithm only requires \(15/32\) as many micro-scale steps as the control simulation (\(3/128\) seconds of velocity-field estimation and \(3/32\) seconds of burn-in for every \(1/4\) second of simulated time), and ignoring burn-in, the ratio drops to \(29/128\).
While not substantial, this still demonstrates that our method provides nontrivial improvement in terms of the number of micro-scale steps needed to accurately take a macro-scale time step.

We finally note that the periodic boundary conditions make it more difficult to directly apply the JKOnet* approach to learn a potential.
Thus, we do not provide a comparison to using this method as we do not believe it fairly reflects the setting where JKOnet* performs well.
We do point out, however, that another advantage of our algorithm is its ability to easily generalize to particles evolving in spaces which are not \(\R^d\), as optimal transport maps between discrete distributions on manifolds are equally well defined.
For another model for which we can fairly compare our algorithm to the equivalent JKOnet* approach, see the following section.

\begin{rmk}
    As a final note, we point out that one can learn a PDE that models all three of our simulations (see \cite{LPSK23} for a discussion of PDEs and the chemotactic model from Section \ref{sec:chemotaxis}, and note that the Langevin dynamics model in Section \ref{sec:halfmoon} can be understood using the Fokker-Planck equation).
    This indicates that our method works for particle models of PDEs, though the practical computational improvement might not be substantial because standard numerical methods for (known) PDEs are relatively cheap.
    In contrast, our method is better suited for the setting where the PDE is not known, and instead one is only presented with a particle model.
\end{rmk}

\section{Simulation Study III: Langevin Dynamics} \label{sec:halfmoon}

One of the key and motivating advantages of framing our algorithm in terms of optimal transport is that it easily generalizes to multi-dimensional models.
We perform one final experiment as a proof of concept demonstrating that our algorithm works with a 2D model as well.
For simplicity, the micro-scale model we use is an Euler--Maruyama simulation of an SDE: at each micro-scale time step of size \(h\), each particle's position \(x_i(t_j)\) is updated according to
\[
    x_i(t_{j+1}) = x_i(t_j) - \nabla U(x_i(t_j))\cdot h + \sqrt{2h}\cdot Z_i(t_j)
\]
where \(Z_i(t_j)\sim \cN(0,I_2)\).
This is the standard Euler--Maruyama simulation of the SDE
\[
    dX_t = -\nabla U(X_t)dt + \sqrt2 dW_t
\]
with potential \(U(x)\).
The potential we use for our micro-scale model is a half-moon potential given by
\[
     U(x,y) = A \left(r(x,y) - R\right)^2 + B \exp\left( - \alpha (y - y_s)\right)
\]
with \(A = 2, B = 0.5, R = 2, \alpha = 1.5, y_s = -0.5\), and where the function \(r(x,y) = \sqrt{x^2+y^2}\) measures the distance from the origin.

\begin{rmk}
    We reiterate that this micro-scale model is computationally cheap over long time scales, so in practice, there is little computational benefit to using our algorithm in this case.
    We present it as a demonstration of the algorithms performance on a 2D model, and in particular one whose steady-state distribution is not as simple as a standard Gaussian or uniform distribution.
\end{rmk}

\begingroup
\renewcommand{\arraystretch}{1.25}
\begin{table}[tbp]
\begin{center}
\begin{tabular}{|l|l|}
    \hline
    Parameter & Description \\
    \hline
    \(N = 10{,}000\) & number of independent particles \\
    \(h = 1/2048\) & micro-scale simulation step size \\
    \(x_i(0) \sim \operatorname{Unif}([-4,4]\times[-4,4])\) i.i.d.\ \(\quad \forall i=1,\ldots,N\) & initial particle locations \\
    \(T = 2\) & end time \\
    \hline
\end{tabular}
\caption{Parameters for 2-D half-moon micro-scale simulation.}
\label{tab:halfmoon-parameters-sim}
\end{center}
\end{table}
\endgroup

\begingroup
\renewcommand{\arraystretch}{1.25}
\begin{table}[ht]
\begin{center}
\begin{tabular}{|l|l|}
    \hline
    Parameter & Description \\
    \hline
    \(S = 1/8\) & length of start-up time \\
    \(k = 32\) & number of centered differences to average \\
    \(H = 7/32\) & length of each Euler-type prediction step \\
    \(H_R = 1/64\) & burn-in time after each Euler step \\
    \(N_T = 7\) & number of Euler steps \\
    \(R = 1/8\) & length of final recovery time after last Euler step \\
    \hline
\end{tabular}
\caption{Additional parameters for approximation algorithm with chemotaxis micro-scale simulation.}
\label{tab:halfmoon-parameters-alg}
\end{center}
\end{table}
\endgroup

\begin{figure}[tbp]
    \centering
    \includegraphics[width=0.9\textwidth]{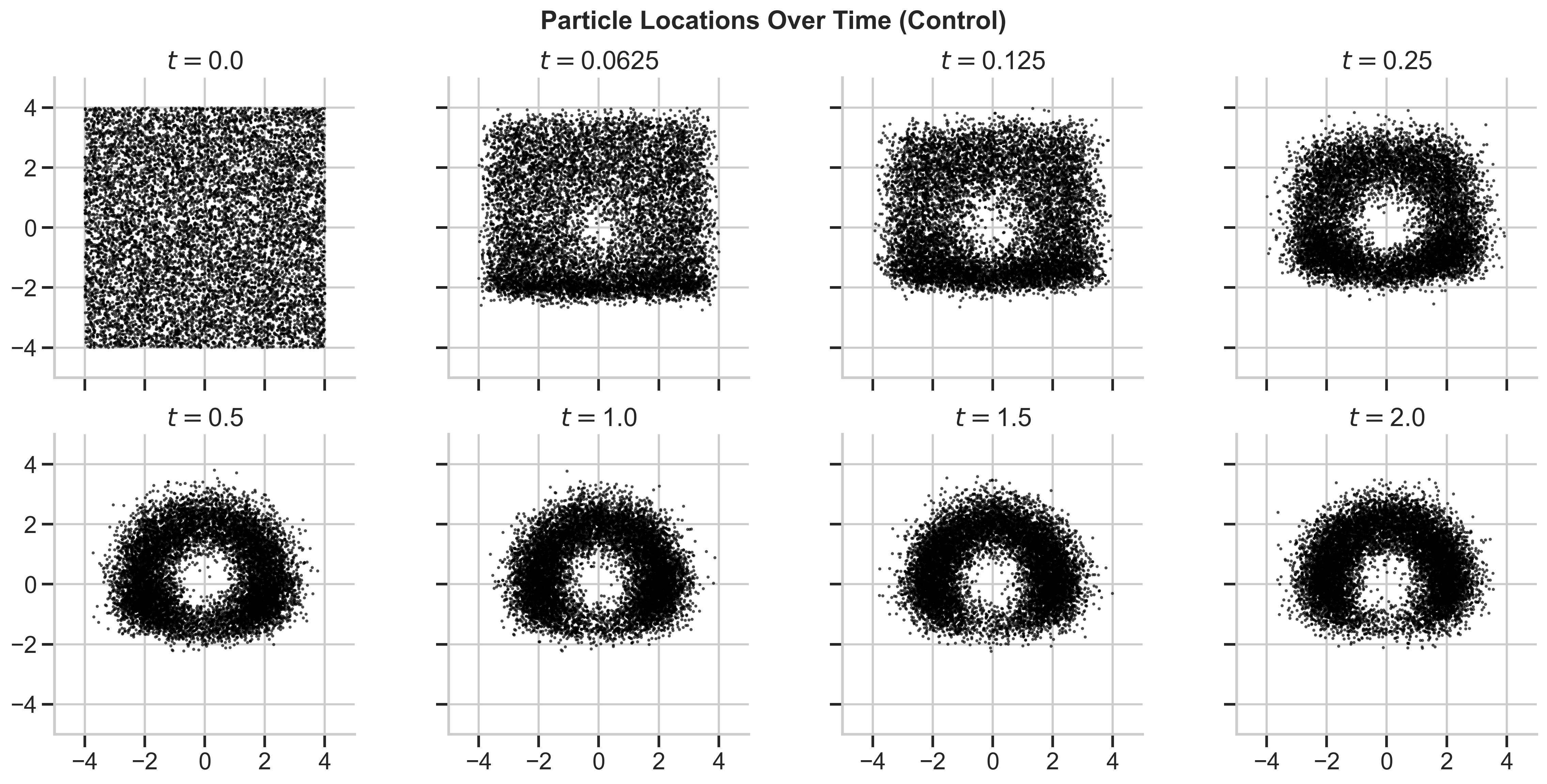}
    \caption{Scatter plots of particle locations at various times \(t\) from \(t=0\) to \(t=2\) for the control simulation of the half-moon Langevin dynamics model.}
    \label{fig:halfmoon-control}
\end{figure}

\begin{figure}[tbp]
    \centering
    \includegraphics[width=0.9\textwidth]{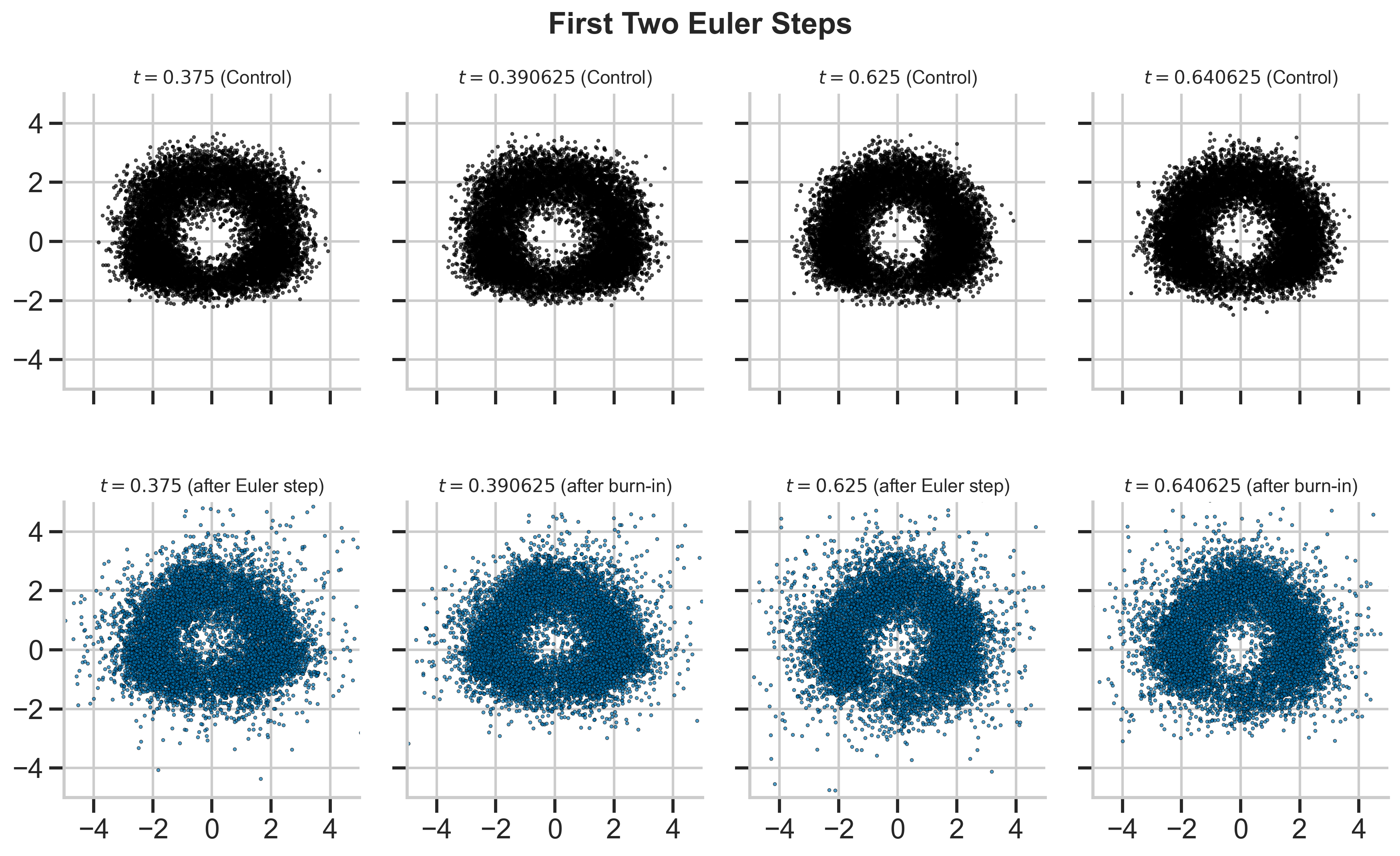}
    \caption{
    Scatter plots of particle locations after first two Euler steps (bottom) compared with corresponding times from control simulation (top).
    We run the micro-scale simulation until \(t=9/64\) (not pictured), take an Euler step to \(t=3/8\) (leftmost), burn in until \(t=25/64\) (second from left), take an Euler step to \(t=5/8\) (second from right), and burn in until \(t=41/64\) (rightmost).
    }
    \label{fig:halfmoon-approx-2i}
\end{figure}
\begin{figure}[tbp]
    \centering
    \includegraphics[width=0.9\textwidth]{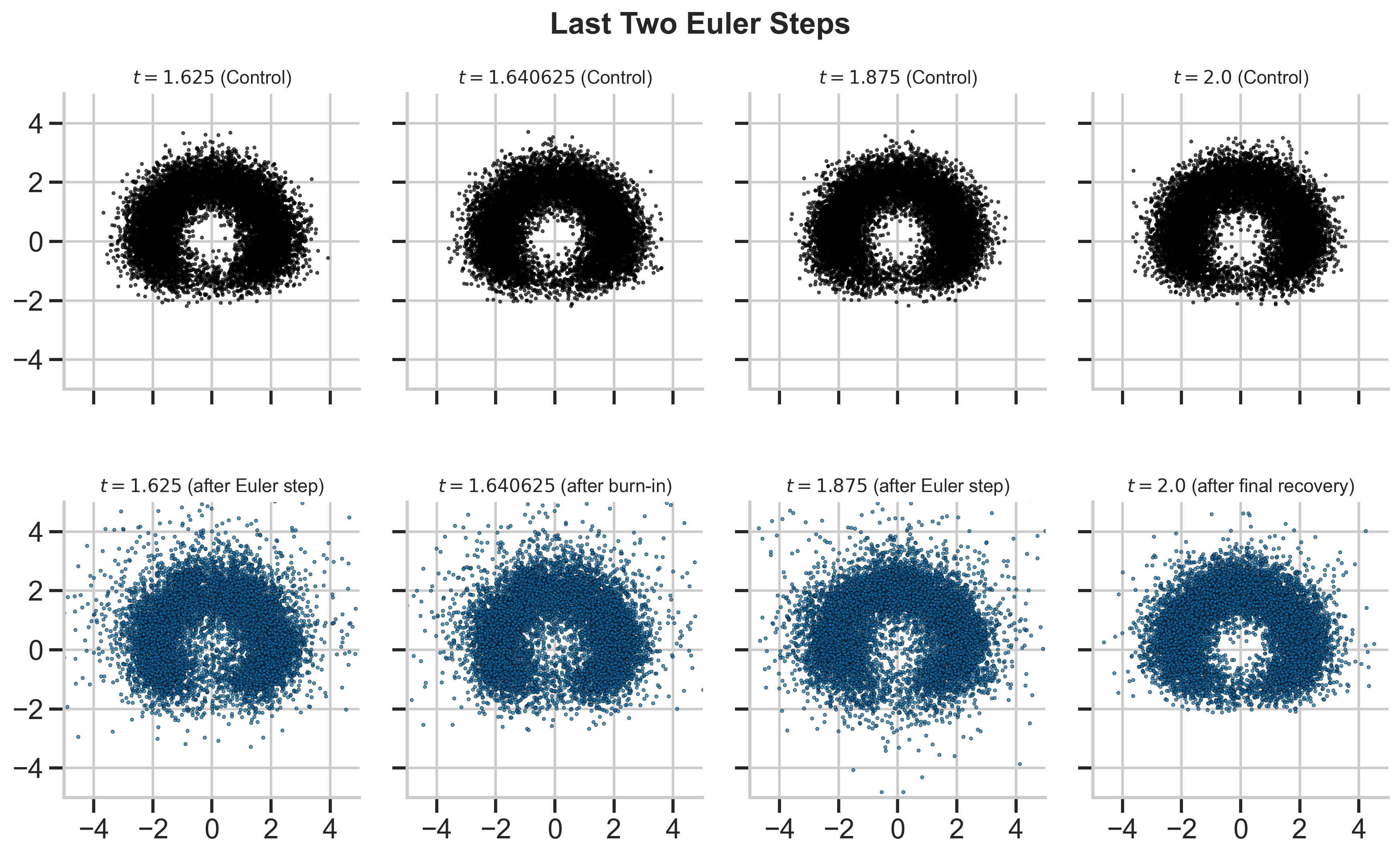}
    \caption{Scatter plots of particle locations after final two Euler steps (bottom) compared with corresponding times from control simulation (top). We take an Euler step from \(t=89/64\) (not pictured) to \(t=13/8\) (leftmost), burn in until \(t=105/32\) (second from left), take an Euler step to \(t=15/8\) (second from right), and finally burn in until \(t=2\) (right-most), the end time.}
    \label{fig:halfmoon-approx-2ii}
\end{figure}

As before, we run the micro-scale simulation from \(t=0\) to \(t=T\) with the parameters shown in Table \ref{tab:halfmoon-parameters-sim} (the end time is once again chosen long enough to effectively reach steady state).
Figure \ref{fig:halfmoon-control} shows scatter plots of the particles over time for this ``control'' simulation.

We then run the approximation algorithm with the same parameters as in Table \ref{tab:halfmoon-parameters-sim} and the additional parameters (as defined in Section \ref{subsec:discrete-algorithm}) in Table \ref{tab:halfmoon-parameters-alg}.
Note that the effective end time of this simulation is \(T = S + N_T(H_S + H + H_R) + R = 2\) (since \(H_S + H + H_R = 1/4\)), which agrees with the control simulation.
Figures \ref{fig:halfmoon-approx-2i} and \ref{fig:halfmoon-approx-2ii} show the distributions obtained after the first two and last two Euler steps, respectively.
Aside from a few particles straying far from the bulk distribution, we see that the approximated distributions closely match the shape and density of the control distributions, and the approximations only require \(3/16\) as many micro-scale steps as the control simulation (\(1/32\) seconds of velocity-field estimation and \(1/64\) seconds of burn-in for every \(1/4\) of simulated time).

\begin{figure}[tp]
    \begin{subfigure}[t]{0.49\textwidth}
        \center{\includegraphics[width=\textwidth,height=2.25in]{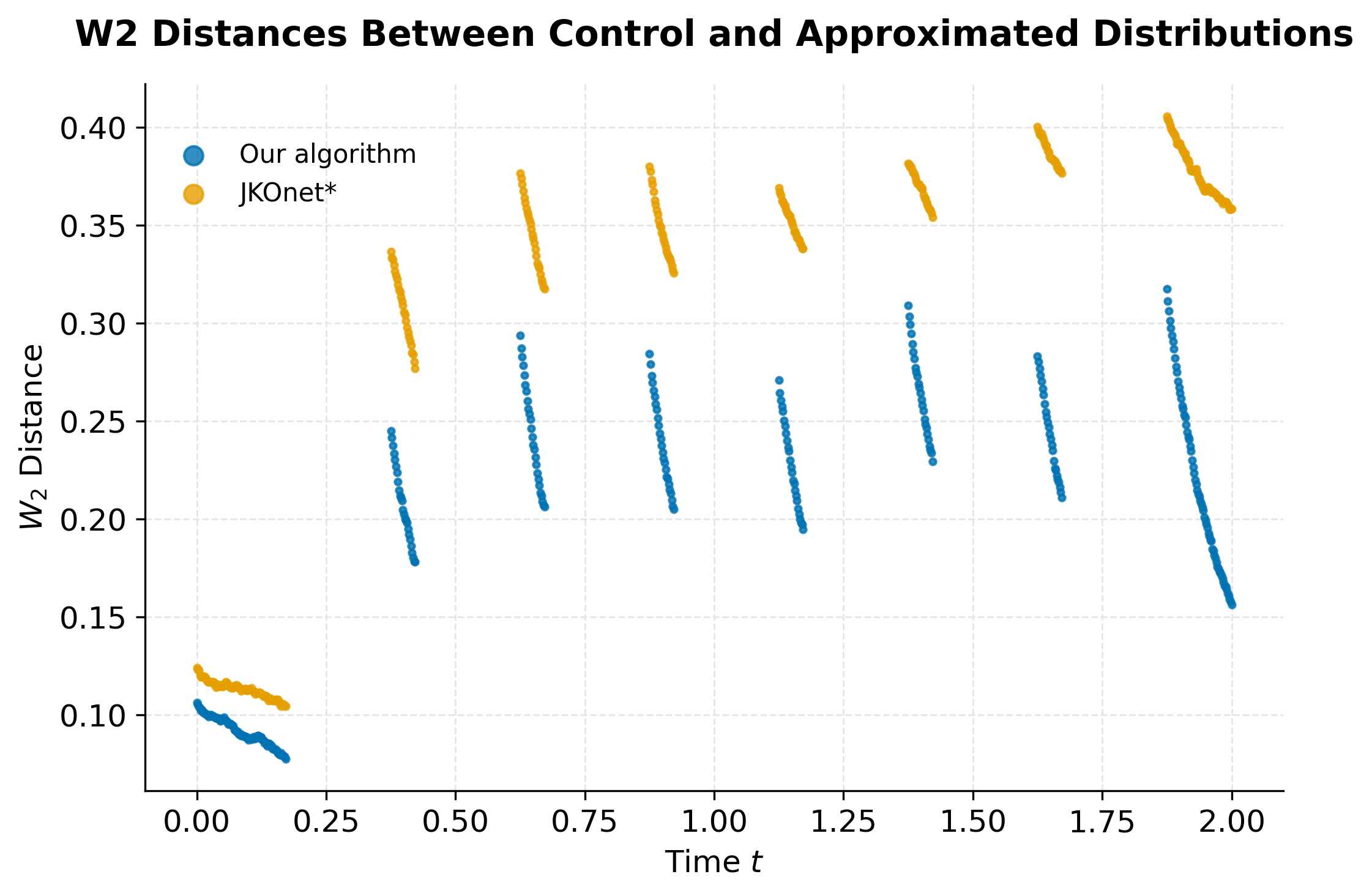}}
        \caption{2-Wasserstein distance between control distribution and approximated distributions versus time.}
        \label{fig:halfmoon-compare-schemes-1}
    \end{subfigure}
    \hfill
    \begin{subfigure}[t]{0.49\textwidth}
        \center{\includegraphics[width=\textwidth]{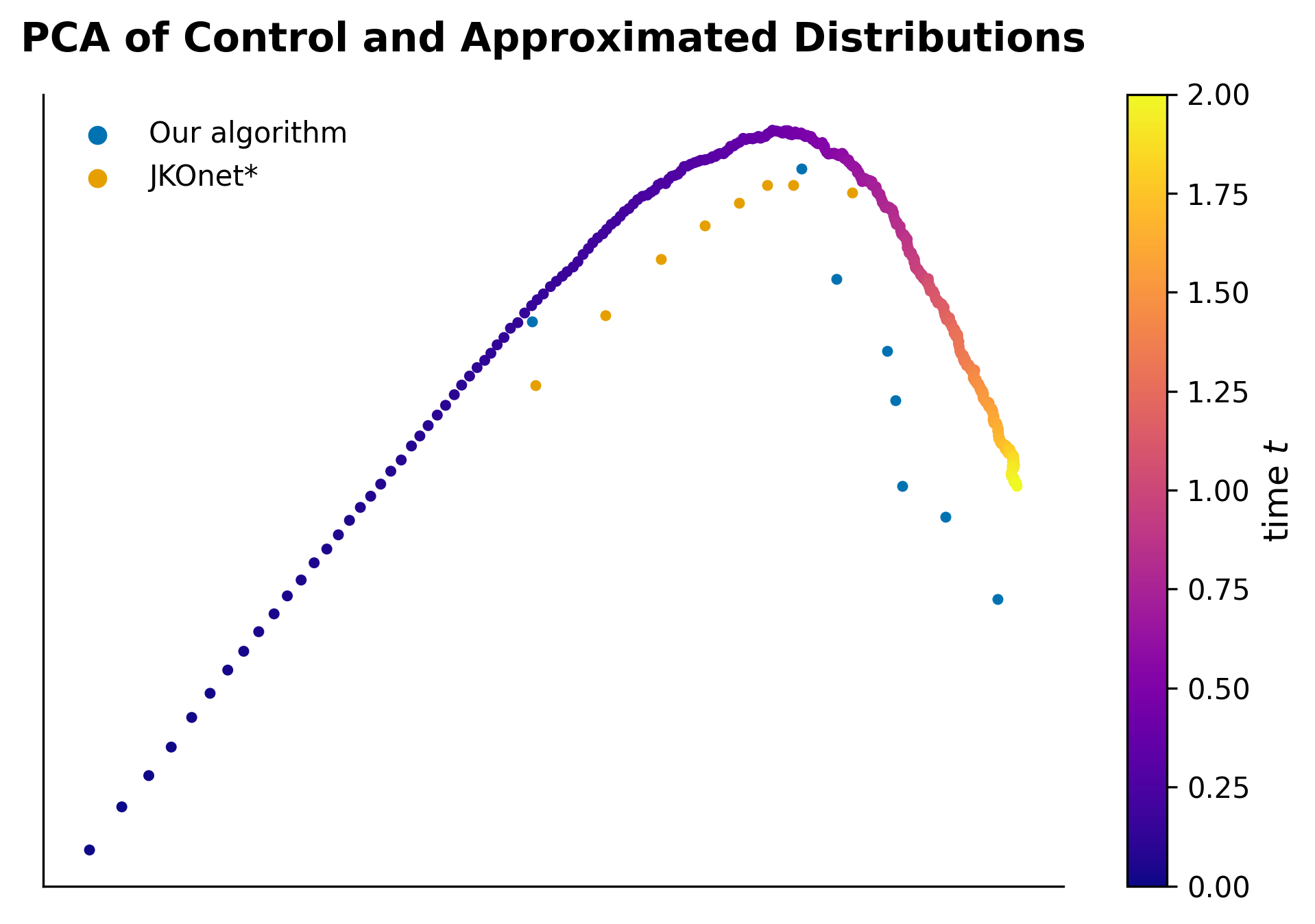}}
        \caption{Projections of distributions onto the two principal components of the control distribution's evolution over time.
        The solid rainbow curve shows the control distribution, where the color represents time according to the color bar at the right.
        }
        \label{fig:halfmoon-compare-schemes-0}
    \end{subfigure}
    \caption{Comparisons between control distributions and approximations.
    The blue is our algorithm, and the orange is using the JKOnet* method for learning and simulating an SDE.}
    \label{fig:halfmoon-compare-approximations}
\end{figure}

We also perform the equivalent algorithm using JKOnet* \cite{TLGD24} to use the \(2k+1 = 65\) micro-scale steps to learn a potential and then simulate an SDE with appropriate potential for the equivalent duration of the Euler step.
We compare the \(W_2\) distance between the control distribution and approximated distributions for both approximation methods in Figure \ref{fig:halfmoon-compare-schemes-1}, and in Figure \ref{fig:halfmoon-compare-schemes-0}, we perform PCA (described in detail in Section \ref{subsec:chemotaxis-experimental-results}) to visualize how well the approximated distributions track the trajectory of the control distribution's evolution.
In both figures, we see that our method outperforms the analogous JKOnet* approach, which suggests that our approach is better able to learn the local evolution.
In particular, it appears that the JKOnet* approximated distributions do not update nearly enough over each step, which indicates that the learned potential is too mild.
This is consistent with the understanding that the ``local'' data over short micro-scale bursts is not sufficient to learn a global potential, and it suggests that our approach of approximating a velocity field with optimal transport more efficiently learns the local evolution in order to predict the distribution some time in the future.

\section*{Acknowledgements}
\addcontentsline{toc}{section}{Acknowledgments}
{\rm E.\ A.\ Nikitopoulos acknowledges support from NSF grant DGE 2038238.
The work of I.\ G.\ Kevrekidis was partially supported by the US Department of Energy.
A.\ Cloninger acknowledges support from NSF grants DMS 2012266 and CISE CCF 2403452.
}

\bibliographystyle{plain}
\bibliography{BibliographyUnified}

\end{document}